\newtheorem{teo}{Theorem}[section]
\newtheorem{coro}[teo]{Corollary}
\newtheorem{lem}[teo]{Lemma}
\newtheorem{pro}[teo]{Proposition}
\newtheorem{defn}[teo]{Definition}
\newtheorem*{teonn}{Theorem}
\newcommand{\D}{\mathbb{D}}
\newcommand{\B}{\mathbb{B}}
\newcommand{\HH}{\mathbb{H}}
\newcommand{\N}{\mathbb{N}}
\newcommand{\C}{\mathbb{C}}
\newcommand{\rr}{\mathbb{R}}
\newcommand{\s}{\mathbb{S}}
\newcommand{\Z}{\mathbb{Z}}
\DeclareMathOperator{\prodstella}{\medcirc \hskip -1.45ex \star }
\DeclareMathOperator{\prodstellanorma}{ \|_{H^2(\B)\medcirc \hskip -1.05ex \star H^2(\B)} }
\DeclareMathOperator{\prodstellanormaduale}{ \|_{(H^2(\B)\medcirc \hskip -1.05ex \star H^2(\B))^*} }
\newcommand{\de}{\partial_c}
\newcommand{\p}{\partial}
\DeclareMathOperator{\IIm}{Im}
\DeclareMathOperator{\RRe}{Re}
\DeclareMathOperator{\ext}{ext}
\DeclareMathOperator{\supp}{supp}
\newcommand{\RR}{\mathbb{R}}
\newcommand{\BB}{\mathbb{B}}
\newcommand{\II}{\mathbb{I}}
\renewcommand{\SS}{\mathbb{S}}
\newcommand{\BBB}{\mathcal{B}}
\title{ From Hankel operators to Carleson measures in a quaternionic variable}
\author{Nicola Arcozzi} 
\address{Dipartimento di Matematica, Universit\`a di Bologna, Piazza di Porta San Donato 5, 40126 Bologna, Italy} 
\email{nicola.arcozzi@unibo.it} 
\author{Giulia Sarfatti} 
\address{Dipartimento di Matematica, Universit\`a di Bologna, Piazza di Porta San Donato 5, 40126 Bologna, Italy} 
\email{giulia.sarfatti@unibo.it, giulia.sarfatti@unifi.it }
\begin{document}
\begin{abstract}
We introduce and study Hankel operators defined on the Hardy space of regular functions of a quaternionic variable.
Theorems analogous to those of Nehari anc C. Fefferman are proved.
\\{\noindent \scriptsize \sc Key words and
phrases:} {\scriptsize{\textsf {Hardy space on the quaternionic ball;  functions of a quaternionic variable; Hankel operators.}}}\\{\scriptsize\sc{\noindent Mathematics
Subject Classification:}}\,{\scriptsize \,30G35, 46E22, 47B35.}
\end{abstract}

\maketitle


{
\small
\noindent \bf Notation. \it The symbol $\HH$ denotes the set of the quaternions $q=x_0+x_1i+x_2j+x_3k=\RRe(q)+\IIm(q)$, with $\RRe(q)=x_0$ and $\IIm(q)=x_1i+x_2j+x_3k$;  
where the $x_j$'s are real numbers and the imaginary units
$i,j,k$ are subject to the rules $ij=k,\ jk=i,\ ki=j$ and $i^2=j^2=k^2=-1$. We identify the quaternions $q$ whose imaginary part vanishes, $\IIm(q)=0$, 
with real numbers, $\RRe(q)\in\RR$; 
and, similarly, we let $\II=\RR i+\RR j+\RR k$ be the set of the imaginary quaternions. The norm $|q|\ge0$ of $q$ is 
$|q|=\sqrt{\sum_{l=0}^3 x_l^2}=(q\overline{q})^{1/2}$, where $\overline{q}=x_0-x_1i-x_2j-x_3k$ is the conjugate of $q$. 
The open unit ball $\BB$ in $\HH$ contains the quaternions $q$ such that $|q|<1$. The boundary of $\BB$ in $\HH$
is denoted by $\partial\BB$. By the symbol $\SS$ we denote the unit sphere of the imaginary quaternions: $q\in\II$ belongs to $\SS$ if $|q|=1$. For $I$ in $\s$, the slice 
$L_I=L_{-I}$ in $\HH$ contains all quaternions having the form $q=x+yI$, with $x,y$ in $\RR$.
\rm
}

\section{Introduction}
Let $\HH$ be the skew-field of the quaternions.
The quaternionic Hardy space $H^2(\BB)$ consists of the formal power series of the quaternionic variable $q$, $|q|<1$,
\begin{equation}
\label{serie}
 f(q)=\sum_{n=0}^\infty q^na_n,
\end{equation}
such that the sequence of quaternions $\{a_n\}$ satisfies
\begin{equation}\label{htwo}
\|f\|_{H^2(\BB)}:=\|\{a_n\}\|_{\ell^2(\N,\HH)}= \left(\sum_{n=0}^\infty \left|a_n\right|^2\right)^{1/2}<\infty.
\end{equation}
Such functions are \it regular \rm in $\B$ in the sense of Gentili and Struppa \cite{GSAdvances}.
For functions in $H^2(\BB)$, the quaternion valued inner product is
\begin{equation*} 
\left\langle\sum q^na_n,\sum q^nb_n\right\rangle_{H^2(\BB)}:=\sum_{n=0}^\infty\overline{b_n}a_n.
\end{equation*}
We might think of $a=\{a_n\}$ as a discrete, positive time, quaternionic-valued signal, 
with finite energy; and, using standard notation, of $f(q)=\widehat{a}(q)=\sum_{n=0}^{\infty}q^na_n$ as its generating function. Suppose $T$ is a linear, stable, 
time-invariant, realizable filter acting on such signals. By this we mean a linear, bounded operator $T$ on $\ell^2(\N,\HH)$, such that: \it (time-invariance) \rm 
$T$ commutes with the shift operator;
and \it(realizability) \rm the output $Ta$ at time $n$ depends on $a_m$ with $m\le n$ only.   The shift operator $S$ is defined as
$$
Sa(n)=a(n-1).
$$
We have used the functional notation $a(n)=a_n$.
We adopt here the convention that in vector spaces $V$ on the quaternionic skew-field, in multiplication, scalar factors are on the right: if $v\in V$ and $\alpha\in\HH$, then $v\alpha\in V$.
Exactly as in the complex valued case, $T$ is time-invariant and well defined on Kronecker functions $\delta_k$ ($k\in\Z$) if, and only if, $T=T_K$ is a convolution operator:
\begin{equation}\label{ascoli}
T_Ka(n)=K*a(n)=\sum_{k\in\Z}K(n-k)a(k),
\end{equation}
with $K=\{K(n)\}$ the sequence defined by $K=T\delta_0$. On the other hand, realizability is equivalent to the condition that $K(n)=0$ if $n<0$.
Relation \eqref{ascoli} justifies the introduction of the $\star$-product between functions of the form \eqref{serie}. If $g(q)=\sum_{n\ge0}q^nb_n$, then
$$
(f\star g)(q)=\sum_{n\ge0}q^n\sum_{k=0}^na_{n-k}b_{k}.
$$
The $\star$-product is not a pointwise product. See Section \ref{prelim}. 
If $\phi(q)=\widehat{K}(q)= \sum_{n\ge0}q^nK(n)$, then \eqref{ascoli} becomes
\begin{equation}\label{bolzano}
\widehat{Ta}=\phi\star\widehat{a}:=M_\phi a: 
\end{equation}
$M_\phi$ is a $\star$-multiplication operator,
where 
$\widehat{a}(q)=\sum_{n\ge0}q^na(n)$.

Let $\BBB(\mathcal{H})$ be the noncommutative algebra of the bounded operators on the quaternionic Hilbert space $\mathcal{H}$, 
normed with the sup-norm. See \cite{ghilonimorettiperotti} for the basic facts of functional analysis of linear spaces over the quaternions. 
The sharp result on boundedness for $\star$-multiplication 
operators is in \cite{milanesi3}.
\begin{teonn}[A]  Let $\phi:\BB\to\HH$ be a function which can written  as $\phi(q)=\sum_{n\ge0}q^n\check{\phi}(n)$, where the sum converges absolutely in $\BB$. Then, 
 $$
 \|M_\phi\|_{\BBB(H^2(\BB))}=\|\phi\|_{H^\infty(\BB)}:=\sup_{q\in\BB}|\phi(q)|.
 $$
 As a consequence, if $\check {\phi}$ denotes the sequence of coefficients of $\phi$, $\check{\phi}=\{ \check{\phi} (n)\}_{n}$, then $ \|T_{\check{\phi}}\|_{\mathcal B (\ell^2(\N,\HH))}=\|\phi\|_{H^\infty(\BB)}$.
\end{teonn}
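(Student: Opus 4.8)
The plan is to prove the two estimates $\|M_\phi\|_{\BBB(H^2(\BB))}\le\|\phi\|_{H^\infty(\BB)}$ and $\|M_\phi\|_{\BBB(H^2(\BB))}\ge\|\phi\|_{H^\infty(\BB)}$, understood as an identity in $[0,+\infty]$; the statement for $T_{\check\phi}$ then follows from \eqref{bolzano}, because $a\mapsto\widehat a$ is a surjective isometry of $\ell^2(\N,\HH)$ onto $H^2(\BB)$ intertwining $T_{\check\phi}$ with $M_\phi$. Two facts from Section~\ref{prelim} carry the argument: the transformation formula $(\phi\star f)(q)=\phi(q)\,f\!\big(\phi(q)^{-1}q\,\phi(q)\big)$ when $\phi(q)\ne0$ (and $(\phi\star f)(q)=0$ when $\phi(q)=0$), and the representation formula, which for a fixed $I\in\SS$ writes $g(x+yJ)=\alpha_g(x,y)+J\,\beta_g(x,y)$ for every $J\in\SS$, with $\alpha_g(x,y)=\tfrac12\big(g(x+yI)+g(x-yI)\big)$ and $\beta_g(x,y)=\tfrac12 I\big(g(x-yI)-g(x+yI)\big)$. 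The only metric input beyond \eqref{htwo} is the slice identity
\[
\|g\|_{H^2(\BB)}^2=\frac1{2\pi}\int_0^{2\pi}|g(e^{I\theta})|^2\,d\theta=\frac1\pi\int_0^{\pi}\big(|\alpha_g(\theta)|^2+|\beta_g(\theta)|^2\big)\,d\theta ,
\]
valid for any fixed $I\in\SS$ and $g\in H^2(\BB)$, where $e^{I\theta}=\cos\theta+I\sin\theta$ and $\alpha_g(\theta):=\alpha_g(\cos\theta,\sin\theta)$, $\beta_g(\theta):=\beta_g(\cos\theta,\sin\theta)$: the first equality comes from expanding $g=\sum q^na_n$ on $\{e^{I\theta}\}$ and integrating termwise (terms with $n\ne m$ average to $0$, and the part of $|a_0|^2$ orthogonal to $\mathbb C_I$ vanishes), the second by splitting $(0,2\pi)$ at $\pi$ and using $|\alpha+J\beta|^2+|\alpha-J\beta|^2=2(|\alpha|^2+|\beta|^2)$.

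For the lower bound I would use the reproducing kernel $k_w(q)=\sum_{n\ge0}q^n\overline{w}^{\,n}$, $w\in\BB$, for which $\langle g,k_w\rangle_{H^2(\BB)}=g(w)$ and $\|k_w\|^2=(1-|w|^2)^{-1}$. From $\langle M_\phi g,k_w\rangle=(\phi\star g)(w)=\phi(w)\,g(w')$, where $w'=\phi(w)^{-1}w\,\phi(w)$, together with the right‑scalar rule $\langle g,\,k_{w'}\overline{\phi(w)}\rangle=\phi(w)\langle g,k_{w'}\rangle$, one reads off $M_\phi^*k_w=k_{w'}\overline{\phi(w)}$ (and $M_\phi^*k_w=0$ if $\phi(w)=0$); since $|w'|=|w|$ this gives $\|M_\phi^*k_w\|=|\phi(w)|\,\|k_w\|$, hence $\|M_\phi\|=\|M_\phi^*\|\ge|\phi(w)|$ for every $w\in\BB$, i.e.\ $\|M_\phi\|\ge\|\phi\|_{H^\infty(\BB)}$ (the case $\|M_\phi\|=+\infty$ being trivial).

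For the upper bound, assume $\phi\in H^\infty(\BB)$ (otherwise there is nothing to prove); then $\phi\in H^2(\BB)$, and by density it suffices to bound $\|\phi\star f\|_{H^2(\BB)}$ for $f$ a polynomial. Replacing $\phi$ by $\phi_r(q):=\phi(rq)$, $r<1$ — so that $\|\phi_r\|_{H^\infty(\BB)}\le\|\phi\|_{H^\infty(\BB)}$ and $\phi_r\star f\to\phi\star f$ in $H^2(\BB)$ as $r\to1$ — we may assume $\phi$ extends continuously to $\overline\BB$; then the transformation formula holds on $\overline\BB\setminus\{\phi=0\}$ and $\phi\star f$ extends continuously to $\overline\BB$. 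Fix $I\in\SS$. For $\theta\in(0,\pi)$ with $\phi(e^{I\theta})\ne0$, the transformation formula, the identity $\phi(e^{I\theta})^{-1}e^{I\theta}\phi(e^{I\theta})=\cos\theta+J_\theta\sin\theta$ with $J_\theta:=\phi(e^{I\theta})^{-1}I\phi(e^{I\theta})\in\SS$, the representation formula for $f$ at that point, and $\phi(e^{I\theta})\,J_\theta=I\,\phi(e^{I\theta})$ give
\[
(\phi\star f)(e^{I\theta})=\phi(e^{I\theta})\,\alpha_f(\theta)+I\,\phi(e^{I\theta})\,\beta_f(\theta);
\]
writing $\phi(e^{I\theta})=\alpha_\phi(\theta)+I\beta_\phi(\theta)$ and using $I^2=-1$, the right‑hand side equals $P_\theta+I\,Q_\theta$ with $P_\theta=\alpha_\phi(\theta)\alpha_f(\theta)-\beta_\phi(\theta)\beta_f(\theta)$ and $Q_\theta=\beta_\phi(\theta)\alpha_f(\theta)+\alpha_\phi(\theta)\beta_f(\theta)$ (the formula persists, both sides being $0$, when $\phi(e^{I\theta})=0$), while the analogous computation on $(\pi,2\pi)$ yields $P_{2\pi-\theta}-I\,Q_{2\pi-\theta}$. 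Inserting this into the slice identity for $\phi\star f$ the cross terms cancel and
\[
\|\phi\star f\|_{H^2(\BB)}^2=\frac1\pi\int_0^\pi\big(|P_\theta|^2+|Q_\theta|^2\big)\,d\theta .
\]
It remains to compare the integrand with $|\alpha_f(\theta)|^2+|\beta_f(\theta)|^2$, and the sharp constant is $\|\phi\|_{H^\infty(\BB)}^2$. I would establish the elementary quaternionic inequality
\[
|a\alpha-b\beta|^2+|b\alpha+a\beta|^2\le\big(|a|^2+|b|^2+2|\IIm(a\overline b)|\big)\big(|\alpha|^2+|\beta|^2\big),\qquad a,b,\alpha,\beta\in\HH,
\]
by expanding, rewriting the two cross terms (via cyclicity of $\RRe$ and $a\overline b-\overline{a\overline b}=2\IIm(a\overline b)$) as $4\,\RRe\!\big(\IIm(\overline a b)\,\alpha\overline\beta\big)$, and applying Cauchy--Schwarz and $2|\alpha||\beta|\le|\alpha|^2+|\beta|^2$. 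With $a=\alpha_\phi(\theta)$, $b=\beta_\phi(\theta)$ the constant is $|\alpha_\phi(\theta)|^2+|\beta_\phi(\theta)|^2+2|\IIm(\alpha_\phi(\theta)\overline{\beta_\phi(\theta)})|$, which by expanding $|\alpha_\phi(\theta)+J\beta_\phi(\theta)|^2$ and maximizing over $J\in\SS$ (using $\max_{J\in\SS}\RRe(Jw)=|\IIm w|$) equals $\max_{J\in\SS}|\phi(\cos\theta+J\sin\theta)|^2\le\|\phi\|_{H^\infty(\BB)}^2$. Hence $|P_\theta|^2+|Q_\theta|^2\le\|\phi\|_{H^\infty(\BB)}^2\big(|\alpha_f(\theta)|^2+|\beta_f(\theta)|^2\big)$, and integrating over $(0,\pi)$ and applying the slice identity to $f$ gives $\|\phi\star f\|_{H^2(\BB)}\le\|\phi\|_{H^\infty(\BB)}\|f\|_{H^2(\BB)}$.

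The step I expect to be the real obstacle is not any single inequality but spotting the algebraic structure that makes the comparison sharp: the identity $\phi(e^{I\theta})\alpha_f(\theta)+I\phi(e^{I\theta})\beta_f(\theta)=P_\theta+IQ_\theta$ puts $(\phi\star f)(e^{I\theta})$ back into ``representation form'' with $\theta$‑independent quaternion coefficients, and it is this that both cancels the cross terms and pins the constant to $\max_{J\in\SS}|\phi(\cos\theta+J\sin\theta)|^2$ rather than the lossy $(|\alpha_\phi(\theta)|+|\beta_\phi(\theta)|)^2$ a naive triangle inequality would produce; together with the quaternionic inequality above this is the mathematical heart of the proof. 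The remaining points — the $\phi_r$ reduction to $\phi$ continuous on $\overline\BB$, the finitely many $\theta$ at which $\phi$ vanishes on $\{e^{I\theta}\}$, and the verification of the slice identity — are routine.
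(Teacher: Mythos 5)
This is the one statement in the paper for which there is no internal proof to compare against: Theorem~A is imported from \cite{milanesi3}, so you have supplied an argument where the paper only cites. As far as I can check, your proof is correct and self-contained, and it is a genuinely more elementary route than the multiplier-theoretic one in the cited reference: it uses only tools the paper itself sets up (the Representation Formula, Proposition~\ref{trasf}, and the reproducing kernel). The lower bound via $M_\phi^*k_w=k_{w'}\overline{\phi(w)}$ with $w'=\phi(w)^{-1}w\phi(w)$ and $|w'|=|w|$ is sound (note the right-scalar convention makes $\langle g,k_{w'}\overline{\phi(w)}\rangle=\phi(w)g(w')$ come out exactly right). The heart of the upper bound checks out: $(\phi\star f)(e^{I\theta})=\phi(e^{I\theta})\alpha_f(\theta)+I\phi(e^{I\theta})\beta_f(\theta)$ does put the image back into representation form, the conjugate point $e^{-I\theta}$ gives $P_\theta-IQ_\theta$ so the cross terms in the slice $L^2$ identity cancel, and your elementary inequality is exact: expanding gives the cross term $4\RRe\bigl(\IIm(\overline a b)\,\alpha\overline\beta\bigr)$, and since $\RRe(a\overline b)=\RRe(\overline a b)$ one has $|\IIm(\overline ab)|=|\IIm(a\overline b)|=|\IIm(b\overline a)|$, so the constant $|a|^2+|b|^2+2|\IIm(b\overline a)|$ equals $\max_{J\in\SS}|a+Jb|^2=\max_{J}|\phi(\cos\theta+J\sin\theta)|^2\le\|\phi\|_{H^\infty(\BB)}^2$, which is precisely what is needed. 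Two small points you should write out in a final version: the slice identity $\|g\|^2_{H^2(\BB)}=\frac1{2\pi}\int_0^{2\pi}|g(e^{I\theta})|^2\,d\theta$ is applied to $\phi_r\star f$, which is regular on a ball of radius $1/r>1$, so boundary values and the validity of Proposition~\ref{trasf} on $\partial\BB$ are unproblematic; and passing from ``bounded on polynomials'' to ``bounded on $H^2(\BB)$'' needs the one-line remark that the coefficients of $\phi\star f_N$ converge termwise to those of $\phi\star f$, so the extension really is $M_\phi$.
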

Equation \eqref{ascoli}, with $n,k\ge0$, might be interpreted as multiplication times an infinite matrix $A_{\alpha}=[\alpha(n,k)]_{n,k\ge0}=[K(n-k)]_{n,k\ge0}$ with constant diagonals.
In applications, it is important to consider the case of an infinite matrix with constant anti-diagonals,
that is of Hankel operators. See \cite{peller} for a detailed excursion into the theory of such operators.

Let $\alpha:\N\to\HH$ be a quaternion valued sequence. Form the infinite matrix $A_\alpha=[\alpha(j+k)]_{j,k=0}^{+\infty}$, 
and let $\Gamma_\alpha$ act on $\HH$ valued sequences $v=(v_j)_{j=0}^{+\infty}$ by matrix multiplication: $(\Gamma_\alpha v)(j)=\sum_{k=0}^{\infty}\alpha(j+k)v(k)$. 

\vskip0.5cm

\noindent \bf Nehari's Problem. \rm Under which conditions on $\alpha$ is the operator $\Gamma_\alpha$ bounded on $\ell^2(\N,\HH)$? 

\vskip0.5cm

\noindent Exactly as in the complex valued case, the problem can be reformulated in terms of regular functions; one has only to be careful in keeping account of the non-commutativity of
the product in $\HH$. 
For two positive real numbers $r,s$ we will write $r\approx s$ if $\frac{1}{C} r \le s \le C r$ for a positive constant $C$ and we will write $r\lesssim s$ if there exists $C>0$ such that $r\le Cs$. 
Our first main result is the following.

\begin{teo}\label{eboli}
Let the sequence $\alpha$ and the regular function $b(q)=\sum_{n=0}^\infty q^n\check{b}(n)$ be related via
\begin{equation}\label{domodossola}
\alpha(n)=\overline{\check{b}(n)},\text{ for }n\ge0.
\end{equation}
Set $\Lambda_b(h):=\left\langle h,b\right\rangle_{H^2(\B)}$ when $h$ is regular in $\B$. 

The following conditions are equivalent.
\begin{enumerate}
 \item[(a)] $ \left\|\Gamma_\alpha\right\|_{\BBB(\ell^2(\N,\HH))}<\infty$.
 \item[(b)]  $\sup_{f,g\ne0}\frac{\left|\left<f\star g,b\right>_{H^2(\BB)}\right|}{\|f\|_{H^2(\BB)}\cdot\|g\|_{H^2(\BB)}}<\infty$.
 \item[(c)] $\|b\|_{BMO(\BB)}:=\sup_{I\in\SS}\|b\|_{BMO(\BB_I)}<\infty$; where $\|b\|_{BMO(\BB_I)}$ is the $BMO$-norm of the restriction of $b$ to the slice $\BB_I$.  
 \item[(d)] The measure $d\mu_b(q)=|\de b(q)|^2(1-|q|^2)dVol_\BB(q)$ on $\BB$ is a Carleson measure for $H^2(\BB)$:  the inequality $\int_\BB|f|^2d\mu_b\le c\left(\mu_b\right)\|f\|_{H^2(\BB)}^2$
 holds whenever $f$ is in $H^2(\BB)$.
 \item[(e)] $\Lambda_b$ belongs to $\left[H^2(\BB)\prodstella H^2(\BB)\right]^*$, the space dual to the weak $\star$-product of two copies of $H^2(\BB)$, w.r.t. 
 the inner product in $H^2(\BB)$.
\end{enumerate}
 Moreover,
 $$
 \left\|\Gamma_\alpha\right\|_{\BBB(\ell^2(\N,\HH))}= \sup_{f,g\ne0}\frac{\left|\left<f\star g,b\right>_{H^2(\BB)}\right|}{\|f\|_{H^2(\BB)}\cdot\|g\|_{H^2(\BB)}}\approx 
 \|\Lambda_b\prodstellanormaduale\approx \|b\|_{BMO(\BB)}\approx C\left(\mu_b\right)^{1/2}.
 $$
\end{teo}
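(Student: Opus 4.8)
The plan is to prove the equivalences in a cycle, using the complex-variable theory on each slice as a black box and the slice structure of regular functions as the bridge. The natural order is (a) $\Leftrightarrow$ (b), then (b) $\Leftrightarrow$ (c) via slicewise reduction to classical Nehari/Fefferman, then (c) $\Leftrightarrow$ (d) again slicewise, and finally (b) $\Leftrightarrow$ (e) which is essentially a definitional unpacking of the dual of the weak $\star$-product. The norm identities and comparabilities will fall out of the proof of each implication: (a) $=$ (b) will be an exact identity, while the passages through BMO and the Carleson constant will only be comparabilities, with constants independent of the slice.

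\medskip

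\textbf{Step 1: (a) $\Leftrightarrow$ (b), with equality of norms.} First I would write $\langle f\star g,b\rangle_{H^2(\BB)}$ in coordinates. If $f(q)=\sum q^n\check f(n)$, $g(q)=\sum q^n\check g(n)$, then the $n$-th Taylor coefficient of $f\star g$ is $\sum_{k=0}^n\check f(n-k)\check g(k)$, so
\[
\langle f\star g,b\rangle_{H^2(\BB)}=\sum_{n\ge0}\overline{\check b(n)}\sum_{k=0}^n\check f(n-k)\check g(k)=\sum_{j,k\ge0}\overline{\check b(j+k)}\,\check f(j)\check g(k).
\]
By \eqref{domodossola} this is $\sum_{j,k}\alpha(j+k)\check f(j)\check g(k)$; one has to be careful about the order of the quaternionic factors, but with the right-scalar convention this is exactly $\langle \Gamma_\alpha v,w\rangle$ for suitable sequences $v,w$ built from $\check f,\check g$ (possibly with a conjugation, which is an isometry of $\ell^2(\N,\HH)$). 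Since $f\mapsto\check f$ is an isometry $H^2(\BB)\to\ell^2(\N,\HH)$ by \eqref{htwo}, the bilinear-form norm of $\Gamma_\alpha$ equals the supremum in (b); and the bilinear-form norm of a bounded operator on a quaternionic Hilbert space equals its operator norm (here I would cite \cite{ghilonimorettiperotti} for the quaternionic Cauchy–Schwarz / Riesz representation needed to identify these). This gives $\|\Gamma_\alpha\|=\sup_{f,g\ne0}(\cdots)$ on the nose.

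\medskip

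\textbf{Step 2: slicewise reduction and (b) $\Leftrightarrow$ (c) $\Leftrightarrow$ (d).} The key tool is that a regular function on $\BB$ is determined by its restriction to any slice $\BB_I=\BB\cap L_I$, and that restriction is a holomorphic function of one complex variable (identifying $L_I\cong\CC$). For a fixed $I\in\SS$, the Taylor coefficients $\check b(n)$ split along $L_I$ and its orthogonal complement, and the $H^2$ norm, the $\star$-product, and the pairing $\langle\cdot,\cdot\rangle_{H^2}$ all restrict compatibly (the $\star$-product restricted to a slice becomes the ordinary product on that slice when the coefficients lie in $L_I$; in general there is a correction term from the component orthogonal to $I$). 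I would first treat the ``scalar'' case where all $\check b(n)$ lie in a single slice $L_I$: then (b), (c)$_I$, (d)$_I$ are literally the classical Nehari theorem and C. Fefferman's duality theorem $(H^1)^*=BMO$ together with the Littlewood–Paley / Carleson-measure characterization of BMO, applied on $\BB_I\cong\D$. The general case is handled by decomposing $b=b_0+b_1 J$ relative to $I$ where $b_0,b_1$ have coefficients in $L_I$ (a splitting $\HH=L_I\oplus L_I J$), running the scalar case on each piece, and observing that the $\star$-product pairing, the slicewise BMO norm, and $|\de b|^2(1-|q|^2)dVol$ are all comparable to the sum of the corresponding quantities for $b_0$ and $b_1$, uniformly in $I$. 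Taking the supremum over $I\in\SS$ then yields the global equivalences and the comparabilities $\sup_{f,g}(\cdots)\approx\|b\|_{BMO(\BB)}\approx C(\mu_b)^{1/2}$. The Carleson-measure statement (d) additionally needs the quaternionic Carleson embedding theorem; if that is not already available I would prove it slicewise as well, since a Carleson box in $\BB$ fibers over Carleson boxes in the slices.

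\medskip

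\textbf{Step 3: (b) $\Leftrightarrow$ (e), and the main obstacle.} By definition $H^2(\BB)\prodstella H^2(\BB)$ is the completion (or the space of sums $\sum_i f_i\star g_i$) normed by $\|h\|_{\prodstella}=\inf\sum_i\|f_i\|\,\|g_i\|$ over representations $h=\sum_i f_i\star g_i$. A bounded linear functional $\Lambda$ on this space is exactly one for which $|\Lambda(f\star g)|\le c\|f\|\,\|g\|$, and $\Lambda_b(f\star g)=\langle f\star g,b\rangle_{H^2}$, so (e) is a restatement of (b) with $\|\Lambda_b\|_{(\cdots)^*}=\sup_{f,g\ne0}(\cdots)$ — but one must check that $\Lambda_b$ so defined is well-defined on the dense subspace of finite sums (independence of representation), which follows since $\langle\cdot,b\rangle$ is already a genuine functional on $H^2(\BB)$ and $f\star g\in H^2(\BB)$ whenever, say, $f\in H^\infty$ and $g\in H^2$, a dense enough class by Theorem A. \emph{The main obstacle} I anticipate is precisely the bookkeeping of noncommutativity in Steps 1–2: getting the order of quaternionic factors right in $\langle f\star g,b\rangle$, ensuring the slice decomposition $b=b_0+b_1J$ interacts correctly with $\star$ (since $\star$ is not the pointwise product and mixes the two components), and verifying that the correction terms are controlled uniformly in $I$ so that the ``$\approx$'' constants are genuinely slice-independent. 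The analytic inputs (Nehari, Fefferman, Carleson embedding) are classical; the real work is the careful transfer through the slice structure.
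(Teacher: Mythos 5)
Your overall architecture (a cycle of equivalences, slicewise reduction to classical Nehari/Fefferman/Carleson via the Splitting Lemma, and a definitional unpacking for (e)) matches the paper's, and your Steps 1 and 3 are at the same level of rigor as the paper's treatment of (a)$\Leftrightarrow$(b) and of Theorem \ref{bduale}. The gap is in Step 2, in the ``hard'' direction: from (c) or (d) back to the boundedness of the Hankel form (b) for \emph{arbitrary} $f,g\in H^2(\B)$. Splitting only the symbol as $b=b_0+b_1J$ and ``running the scalar case on each piece'' does not reduce this direction to the classical theorems, because $f$ and $g$ need not preserve any slice and $f\star g$ is not a pointwise product; your remark that ``there is a correction term from the component orthogonal to $I$'' names the difficulty without resolving it. The paper's proof of (d)$\Rightarrow$(b) (Theorem \ref{bcarleson}) is genuinely quaternionic: it uses a Littlewood--Paley identity for the $H^2(\B)$ inner product (Proposition \ref{norma2} and \eqref{innerpro2}), the Leibniz rule for $\de(f\star g)$ combined with the transformation formula $f\star g(q)=f(q)\,g(f(q)^{-1}qf(q))$ of Proposition \ref{trasf} --- which forces one to evaluate $g$ at a rotated point $\hat q$ on the same $2$-sphere --- and then the Representation Formula together with the regular conjugate $g^c$ (with $\|g^c\|_{H^2}=\|g\|_{H^2}$) to dominate $|g(\hat q)|^2$ by $|g(q)|^2+|g^c(q)|^2$. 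None of these ingredients appears in your outline, and without them the implication is unproved.

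A second, related gap: you treat (d) as a slicewise condition ``(d)$_I$'', but condition (d) as stated is global, and a Carleson measure for $H^2(\B)$ need not restrict to a Carleson measure on each slice (a single slice is a null set for $dVol_\B$, so the global imbedding inequality gives only an average over $I\in\s$ of the slice quantities). The paper only uses the easy implication slice-Carleson $\Rightarrow$ Carleson (Proposition \ref{sliceimplica}) and obtains (d)$\Rightarrow$(c) by the detour (d)$\Rightarrow$(b)$\Rightarrow$(b restricted to slice-preserving $f,g$)$\Rightarrow$ classical Nehari on the slice. That said, your slicewise philosophy can be pushed through for (c)$\Rightarrow$(b) if you also split $f=F_1+F_2\star J$ and $g=G_1+G_2\star J$ and compute $f\star g=(F_1\star G_1-F_2\star G_2^c)+(F_1\star G_2+F_2\star G_1^c)\star J$ on $\B_I$: each component is then a genuine product of holomorphic $H^2(\D)$ functions (the regular conjugates preserve the $H^2$ norm), so the form decomposes into four classical Hankel forms with symbols $b_0,b_1$ and uniform constants. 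This would be an alternative to the paper's Littlewood--Paley argument, but it is a computation you must actually carry out, not the one you wrote.
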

In (d), $\de b(q)=\sum_{n=1}^\infty n\check{b}(n)q^{n-1}$ is the Cullen derivative of $b$. See Section \ref{prelim} for basic terminology and facts regarding regular functions.

The statement summarizes a circle of ideas which, in the complex valued case, are due to Nehari \cite{nehari}, C. Fefferman \cite{feffermanstein}, 
Coifman, Rochberg and Weiss \cite{coifmanrochbergweiss}. The weak $\star$-product  $H^2(\BB)\prodstella H^2(\BB)$ is defined as the space of the functions  $\varphi$
which are regular in $\BB$, and such that
\begin{equation}\label{firenze}
 \|\varphi\prodstellanorma:=\inf\left\{\sum_{j=1}^\infty\|f_j\|_{H^2(\BB)}\cdot\|g_j\|_{H^2(\BB)}:\ \varphi=\sum_{j=1}^\infty f_j\star g_j\right\}<\infty.
\end{equation}
The proof of Theorem \ref{eboli} partly makes direct use of the corresponding results in one complex variable, and partly adapts their proofs to the quaternionic setting, 
but some new ideas are required.
On the one hand there is no available inner-outer factorization for $H^2$ functions in the quaternionic sense. On the other hand the geometric properties of $H^2$'s reproducing kernel are not as clear as in the one complex variable case.

In Section \ref{gorizia} we discuss several equivalent definitions of $BMOA$ in the quaternionic context. There we show in a rather indirect way the curious fact that in (\ref{firenze})
the infimum might be taken over decompositions with two summands only, $\varphi=\sum_{j=1}^2 f_j\star g_j$. Having just one summand would imply a good factorization for 
regular functions in $H^1(\B)$, but we do not have a result of this strength.

In order to make more concrete the definition of Carleson measures, we also provide a geometric characterization of them, which might have some independent interest.
For $q=re^{J\theta}\in \B$, with $r\ge0$, $\theta\in\rr$, and $J\in\SS$, let 
\[S(q)= \{\varrho e^{I\alpha}\in \B \ \colon \ |\alpha-\theta|\le1-r,\  \,  0<1-\varrho \le 2(1- r),\ I\in\SS\}.\]
be the symmetric box in $\B$, indexed by $q$, which is independent of the particular $J$.
\begin{teo}\label{badolo}
A measure $\mu$ on $\B$ is a Carleson measure for the Hardy space $H^2(\B)$ if and only if for any $q\in \B$, the measure of the symmetric box $S(q)$ satisfies 
\[\mu(S(q))\lesssim 1-|q|.\] 
\end{teo}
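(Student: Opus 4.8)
The plan is to reduce the quaternionic statement to a slicewise complex statement and then control the slices uniformly. The starting point is the slice structure of regular functions: a function $f(q)=\sum_n q^n a_n$ in $H^2(\B)$ restricts on each slice $\BB_I=\B\cap L_I$ to a holomorphic function $f_I$ of the complex-like variable $x+yI$, and by the Splitting Lemma $f_I = F + G J$ for a fixed $J\perp I$, with $F,G$ holomorphic in the usual sense on the unit disc of $L_I\cong\CC$. A computation with the power series shows $\|f\|_{H^2(\B)}^2$ is comparable to $\|F\|_{H^2(\D)}^2+\|G\|_{H^2(\D)}^2$ (indeed equal, up to the normalization of Lebesgue measure on the circle), uniformly in $I$; likewise $|f|^2$ restricted to a slice is $|F|^2+|G|^2$. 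Thus the Carleson testing inequality $\int_\B |f|^2 d\mu\le c\|f\|^2_{H^2(\B)}$, once one integrates in slices using the ``polar'' decomposition $dVol_\B = dA_{\BB_I}\,dI$ of volume on $\B$ into area on the half-slices times the measure on $\SS$, becomes a statement about the pushforwards $\mu_I$ of $\mu$ onto the slices being uniformly (in $I$) Carleson for $H^2(\D)$ — but with the subtlety that a single test function $f$ sees all slices simultaneously, so one must be careful about whether one needs uniform slicewise Carleson or something a priori weaker/stronger. I would first prove the clean statement: $\mu$ is Carleson for $H^2(\B)$ \emph{iff} the family $\{\mu_I\}_{I\in\SS}$ is uniformly Carleson for $H^2(\D)$, by testing on functions supported essentially on one slice in one direction and by summing in the other.

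The second ingredient is the classical disc result: a positive measure $\nu$ on $\D$ is Carleson for $H^2(\D)$ iff $\nu(Q)\lesssim \ell(Q)$ for all Carleson boxes $Q$ (Carleson's theorem), and the constant is comparable to the Carleson-box constant. So the slicewise reformulation reads: $\mu$ is Carleson for $H^2(\B)$ iff $\mu_I(Q)\lesssim \ell(Q)$ uniformly in $I$ and in the Carleson box $Q\subset\D$. It remains to translate a uniform family of slicewise box conditions into the single three-dimensional box condition $\mu(S(q))\lesssim 1-|q|$. For the easy direction, given $q=re^{J\theta}$, the symmetric box $S(q)$ is, slice by slice, contained in a Carleson box of the slice $\BB_I$ of sidelength $\asymp 1-r$ centered at the arc $|\alpha-\theta|\le 1-r$ (the radial extent $1-\varrho\le 2(1-r)$ is exactly the right aperture); integrating the slicewise estimate $\mu_I(\text{box})\lesssim 1-r$ over $I\in\SS$ and noting $\SS$ has finite measure gives $\mu(S(q))\lesssim 1-r = 1-|q|$. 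For the converse, given a slice $\BB_I$ and a Carleson box $Q\subset\BB_I$ centered at $e^{I\theta}$ with sidelength $h$, I would cover $Q$ by (finitely many, boundedly many) symmetric boxes $S(q_m)$ with $1-|q_m|\asymp h$ whose slices cover $Q$: since $\mu_I$ is the pushforward, $\mu_I(Q)\le\sum_m\mu(S(q_m))\lesssim \sum_m (1-|q_m|)\lesssim h$, uniformly in $I$ because the covering is uniform in $I$.

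The main obstacle I expect is the bookkeeping in the slicewise/three-dimensional dictionary — specifically verifying that the \emph{symmetric} box $S(q)$, which by design does not single out a slice, is the correct object: one must check that its intersection with each slice is sandwiched between Carleson boxes of the same scale (both from above, for the easy direction, and that a fixed slice box can be covered by boundedly many symmetric boxes, for the hard direction), and that the aperture constant $2$ in $0<1-\varrho\le 2(1-r)$ versus $1$ in $|\alpha-\theta|\le 1-r$ is compatible with the (scale-invariant) comparison between boxes and tents/Carleson windows. A secondary technical point is justifying the decomposition $dVol_\B = c\,dA_{\BB_I^+}\,dI$ of Lebesgue volume on $\B$ into slice-area times spherical measure, with the Jacobian being harmless (bounded above and below on $\B$ away from the real axis, which is $\mu$-null for the measures of interest and in any case of zero volume), so that the slicewise reduction of the Carleson \emph{testing} inequality is rigorous and loses only absolute constants. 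Once these geometric comparisons are pinned down, the proof is the two displayed covering arguments above plus a citation of Carleson's disc theorem.
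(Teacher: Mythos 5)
The reduction on which you build everything --- ``$\mu$ is Carleson for $H^2(\B)$ iff the family $\{\mu_I\}_{I\in\SS}$ is uniformly Carleson for $H^2(\D)$'' --- is false, and this is precisely the difficulty the symmetric box is designed to capture. Slice Carleson implies Carleson (Proposition \ref{sliceimplica}), but not conversely. Take $w=u+vI$ with $v$ bounded away from $0$ and let $\mu$ be the uniform measure of total mass $1-|w|$ on the $2$-sphere $u+v\SS$. A symmetric box $S(q)$ either misses this sphere or contains all of it, and it can contain it only when $1-|w|\le 2(1-|q|)$; hence $\mu(S(q))\le 1-|w|\le 2(1-|q|)$ and $\mu$ is Carleson, with constant independent of $w$, by the theorem. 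Yet its disintegrated slice measures $\mu_I$ from \eqref{radon} are $\delta_{u+vI}+\delta_{u-vI}$, whose Carleson constant for $H^2(\D)$ blows up like $(1-|w|)^{-1}$ (test on the normalized kernel). So the Carleson property controls only the aggregate mass of the three--dimensional box, not the mass carried by each slice of it; for the same reason your covering inequality $\mu_I(Q)\le\sum_m\mu(S(q_m))$ fails, since the $\mu_I$ of \eqref{radon} are conditional (probability) measures, not restrictions or pushforwards of $\mu$.

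The mechanism you propose for the necessity direction --- ``testing on functions supported essentially on one slice'' --- is unavailable: by the Representation Formula the values of a regular function on every slice are affine combinations of its values on a single slice, so a nonzero $f\in H^2(\B)$ cannot be localized to a slice. Concretely, $|k_w|$ is of size $(1-|w|)^{-1}$ at $w=u+vI$ but only $O(1)$ at $u+vJ$ for $J$ near $-I$ (the coefficient $\frac{1-JI}{2}$ of the large term degenerates), so $\int_{S(w)}|k_w|^2d\mu$ sees only the part of $\mu$ near the slice $L_I$ and cannot yield $\mu(S(w))\lesssim 1-|w|$. The missing idea is a test function uniformly large on the whole symmetric box: the paper takes the average of $k_{u+vJ}$ over $J\in\SS$, which by the Representation Formula equals $K=\frac{1}{2}\left(k_{u+vJ}+k_{u-vJ}\right)$ for every $J$, and the substantive work (Lemma \ref{stimamoebius}) is the lower bound $|K|\gtrsim(1-|w|)^{-1}$ on all of $S(w)$, i.e.\ ruling out cancellation between the two summands there. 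Nothing in your sketch supplies this. Your sufficiency direction is repairable: replace the family $\{\mu_I\}$ by the single pushforward of $\mu$ onto a fixed slice, for which your box comparison is correct and which is then Carleson for $H^2(\D)$; but to return from that to $\int_\B|f|^2d\mu\lesssim\|f\|^2_{H^2(\B)}$ one must still use the Representation Formula, together with $\|f^c\|_{H^2(\B)}=\|f\|_{H^2(\B)}$, to dominate $|f|$ on an arbitrary slice by $|f|$ and $|f^c|$ on the fixed one --- ``summing in the other direction'' is not a substitute for that step.
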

The Theorem corresponds to Carleson's characterization of the corresponding measures on the complex unit disc, see \cite{carleson}. The ``only if'' part of the proof requires some care.
The fact that it suffices to test the measure over symmetric boxes only reflects the fact that regular functions are affine with respect to the imaginary unit, or, more geometrically, the
fact that the ``invariant metric'' for functions regular on $\B$ is bounded on copies of $\s$ inside $\B$ (see \cite{metrica}).

The paper is structured as follows. In Section \ref{prelim} we provide some background material on regular functions of a quaternionic variable. Sections \ref{carlesonsection}, with
 the proof of Theorem \ref{badolo}, and Section \ref{hankelsection}, with the characterization of the symbols of bounded Hankel operators, are essentially independent of each other.  
 In Section \ref{gorizia} we discuss some properties 
of the quaternionic version of analytic $BMO$. Some questions we could not answer are summarized in Section \ref{trento}.

\section{Preliminaries}\label{prelim}
In this section we recall the definition of slice regular functions over the quaternions $\HH$ (in the sequel simply ``regular'' functions), together with some basic properties. We will restrict our attention to functions defined on the quaternionic unit ball $\B=\{q\in \HH \ : \ |q|<1\}$.
We refer to the book \cite{libroGSS} for all details and proofs.

Let $\s$ denote the two-dimensional sphere of imaginary units of $\HH$, $\s=\{q\in \HH \, | \, q^2=-1\}$. 
One can ``slice'' the space $\HH$ in copies of the complex plane that intersect along the real axis,  
\[ \HH=\bigcup_{I\in \s}(\rr+\rr I),  \hskip 1 cm \rr=\bigcap_{I\in \s}(\rr+\rr I),\]
where $L_I:=\rr+\rr I\cong \C$, for any $I\in\s$.  
Each element $q\in \HH$ 
can be expressed as $q=x+yI_q$, where $x,y$ are real (if $q\in\rr$, then $y=0$) and $I_q$ is an imaginary unit.  
To have a unique decomposition (outside the real axis) we choose $y\geq 0$.

A function $f:\B \to \HH$ is called {\em (slice) regular} if for any $I\in\s$ the restriction $f_I$ of $f$ to $\B_I=\B\cap L_I$
is {\em holomorphic}, i.e. it has continuous partial derivatives and it is such that
\[\overline{\p}_If_I(x+yI)=\frac{1}{2}\left(\frac{\p}{\p x}+I\frac{\p}{\p y}\right)f_I(x+yI)=0\]
for all $x+yI\in \B_I$.

A wide class of examples of regular functions is given by power series with quaternionic coefficients of the form $\sum^{\infty}_{n= 0}q^na_n$
which converge in open balls centered at the origin. In fact,
a function $f$ is regular on $\B$
if and only if $f$ has a power series expansion
$f(q)=\sum^{\infty}_{n= 0}q^na_n$ converging in $\B$.

The {\em slice (or Cullen) derivative} of a function $f$ which is regular on $\B$, is the regular function defined as 
\[\de f(x+yI)=\frac{1}{2}\left(\frac{\p}{\p x} -I\frac{\p}{\p y}\right)f_I(x+yI).\]

%
Slice regular functions defined on $\B$ present a peculiar property.
\begin{teo}[Representation and Extension Formula]\label{RF}
Let $f:\B\to \HH$ be a regular function and let $x+y\s\subset \B$. Then, for any $I,J\in\s$,
\[f(x+yJ)=\frac{1}{2}[f(x+yI)+f(x-yI)]+J \frac I 2 [f(x-yI)-f(x+yI)].\]
Moreover, the previous formula allows one to uniquely extend any holomorphic function $g_I:\B_I\to \HH$ to a slice regular function on $\B$, denoted by $\ext(g_I)$. 
\end{teo}

A basic result that establishes a relation between regular functions and holomorphic functions of one complex variable is the following.
\begin{lem}[Splitting Lemma]\label{split}
Let $f$ be a regular function on $\B$. Then for any $I\in\s$ and for any $J\in \s$, $J\perp I$ there exist two holomorphic functions $F,G:\B_I=\B\cap L_I\to L_I$ such that
\[f(x+yI)=F(x+yI)+G(x+yI)J\]
for any $x+yI\in\B_I$.
\end{lem}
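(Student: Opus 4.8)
The plan is to exploit the real-linear direct sum decomposition $\HH = L_I \oplus L_I J$ induced by the orthogonal pair $I,J\in\s$, and then to read off holomorphicity of the two components directly from the Cauchy--Riemann-type equation defining regularity.

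First I would record the algebraic decomposition. Since $I$ and $J$ are orthogonal imaginary units, $\{1,I,J,IJ\}$ is an orthonormal real basis of $\HH$; the first two vectors span $L_I=\rr+\rr I$, while $J,IJ$ span $\{cJ:c\in L_I\}$, because $(\alpha+\beta I)J=\alpha J+\beta IJ$. Hence every value $f_I(x+yI)\in\HH$ is written \emph{uniquely} as $F(x+yI)+G(x+yI)J$ with $F,G:\B_I\to L_I$. The functions $F$ and $G$ are obtained from $f_I$ by composing with fixed real-linear projections onto the complementary subspaces $L_I$ and $L_I J$, so they inherit continuity and the existence of continuous partial derivatives from $f_I$.

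Next I would substitute this decomposition into the regularity condition $\overline{\p}_I f_I=\tfrac12\!\left(\tfrac{\p}{\p x}+I\tfrac{\p}{\p y}\right)f_I=0$. The key observation is that $\tfrac{\p F}{\p y}$ and $\tfrac{\p G}{\p y}$ again take values in the commutative field $L_I$, so that $I$ commutes with them; using associativity to move $I$ past $J$ on the right, one obtains
\[
\left(\frac{\p}{\p x}+I\frac{\p}{\p y}\right)(F+GJ)
=\left(\frac{\p F}{\p x}+I\frac{\p F}{\p y}\right)
+\left(\frac{\p G}{\p x}+I\frac{\p G}{\p y}\right)J .
\]
The two summands lie in the complementary subspaces $L_I$ and $L_I J$, so by uniqueness of the decomposition the vanishing of the left-hand side forces both brackets to vanish separately. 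These are precisely the Cauchy--Riemann equations for $F$ and $G$ on the domain $\B_I\subset L_I\cong\C$ (with $I$ playing the role of the imaginary unit), whence $F$ and $G$ are holomorphic.

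The only delicate point is the bookkeeping of noncommutativity: the imaginary unit $I$ appearing in the operator acts on the \emph{left}, whereas the splitting variable $J$ multiplies on the \emph{right}. The argument works precisely because $I$ commutes with the $L_I$-valued coefficient functions, so that $I\tfrac{\p F}{\p y}$ and $I\tfrac{\p G}{\p y}$ stay in $L_I$, while it does not commute with $J$; this is what keeps the $L_I$ and $L_I J$ components uncoupled and lets uniqueness of the decomposition split the single quaternionic equation into two independent holomorphicity conditions. Everything else is routine real-linear algebra.
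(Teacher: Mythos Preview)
Your argument is correct and is in fact the standard proof of the Splitting Lemma as presented, e.g., in the monograph \cite{libroGSS}. Note, however, that the paper does not give its own proof of this lemma: it is recorded in Section~\ref{prelim} among the preliminaries as a known background fact with a reference to \cite{libroGSS}, so there is nothing in the paper to compare your approach against. Your write-up matches the textbook argument essentially line by line.
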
 
Since the pointwise product of functions does not preserve slice regularity, 
a new multiplication operation for regular functions is defined. In the special case of power series,
the {\em regular product} (or {\em $\star$-product}) 
of $f(q)=\sum_{n=0}^{\infty}q^na_n$ and $g(q)=\sum_{n=0}^{\infty}q^nb_n$ is
\[
f\star g(q)=\sum_{n\ge0}q^n \sum_{k=0}^{n}a_kb_{n-k}.\]

\noindent The $\star$-product is related to the standard pointwise product by the following formula.
\begin{pro}\label{trasf}
Let $f,g$ be regular functions on $\B$. Then
\[f\star g(q)=\left\{\begin{array}{l r}
0 & \text{if $f(q)=0 $}\\
f(q)g(f(q)^{-1}qf(q)) & \text{if $f(q)\neq 0 $}
                \end{array}\right.
 \] 
\end{pro}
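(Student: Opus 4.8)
The plan is to reduce the statement to one rearrangement of the double series defining $f\star g$, and then to treat the two cases separately. Write $f(q)=\sum_{n\ge0}q^na_n$ and $g(q)=\sum_{m\ge0}q^mb_m$; since $f$ and $g$ are regular on $\B$, both coefficient series converge absolutely at every $q\in\B$, so $\sum_{n,m}|q|^{n+m}|a_n||b_m|=\bigl(\sum_n|q|^n|a_n|\bigr)\bigl(\sum_m|q|^m|b_m|\bigr)<\infty$ and any rearrangement of the relevant double sums is legitimate. Keeping the quaternionic scalars to the right, as in the paper's convention, the first step is the auxiliary identity
\[
f\star g(q)=\sum_{m\ge0}q^m\,f(q)\,b_m,\qquad q\in\B .
\]
To prove it, expand $q^m f(q)b_m=\sum_{n\ge0}q^m\bigl(q^n a_n\bigr)b_m$ and simplify each summand using \emph{only} associativity of quaternion multiplication together with $q^m q^n=q^{m+n}$ (a commutation of powers of a single quaternion, never of $q$ with a coefficient): this gives $q^m(q^n a_n)b_m=q^{m+n}a_nb_m$. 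Summing over $m$ and $n$ and collecting the terms with $m+n=N$ yields $\sum_{N\ge0}q^N\sum_{k=0}^N a_k b_{N-k}=f\star g(q)$.

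With the auxiliary identity in hand the two cases of Proposition~\ref{trasf} are immediate. If $f(q)=0$, every summand $q^m f(q)b_m$ vanishes, so $f\star g(q)=0$. If $f(q)\ne0$, set $s=f(q)$ and use the telescoping identity $s^{-1}q^m s=(s^{-1}qs)^m$ to write
\[
f\star g(q)=\sum_{m\ge0}q^m s\,b_m=s\sum_{m\ge0}\bigl(s^{-1}qs\bigr)^m b_m=f(q)\,g\!\left(f(q)^{-1}qf(q)\right).
\]
One should also note that the evaluation on the right makes sense: conjugation preserves the norm, $|f(q)^{-1}qf(q)|=|q|<1$, so $f(q)^{-1}qf(q)\in\B$, the domain of $g$.

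I do not expect a real obstacle: the whole content is careful bookkeeping of noncommutativity. The single point that deserves attention is the derivation of the auxiliary identity, where it is essential that the manipulations inside $q^m(q^n a_n)b_m$ invoke associativity and the identity $q^mq^n=q^{m+n}$ alone — any inadvertent commutation of $q$ past $a_n$ or $b_m$ would be fatal — and that the interchange of summations is justified by the absolute convergence noted above. Once this identity is established, both branches of the formula follow in two lines each.
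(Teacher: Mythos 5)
Your proof is correct, and it is essentially the standard argument: the paper itself states Proposition~\ref{trasf} without proof, deferring to \cite{libroGSS}, where the result is established exactly as you do, via the identity $f\star g(q)=\sum_{m\ge0}q^m f(q)b_m$ followed by $f(q)^{-1}q^m f(q)=\bigl(f(q)^{-1}qf(q)\bigr)^m$. Your attention to the two points that could go wrong --- justifying the rearrangement by absolute convergence and avoiding any commutation of $q$ past the coefficients --- is exactly what the argument requires, so nothing is missing.
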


\noindent The  reproducing kernel for the quaternionic Hardy space $H^2(\B)$  has the expression:
\[k_{w}(q)=\sum_{n=0}^{\infty}q^n\overline{w}^n=(1-q\overline{w})^{-\star}.\]
where the power  $-\star$ denotes the reciprocal in the slice regular sense, \cite{libroGSS}.
See \cite{milanesi3} for this and other basic facts concerning $H^2(\B)$.
\section{Carleson measures on $\B$}\label{carlesonsection}
A nonnegative Borel measure $\mu$ on $\B$ is called a {\em Carleson measure} for the Hardy space $H^2(\B)$ if it satifies the imbedding inequality
\[\int_{\B}|f(q)|^2d\mu(q)\le c(\mu)\| f\| ^2_{H^2(\B)}\]
for any $f\in H^2(\B)$, with a constant $c(\mu)$ depending on $\mu$ alone.

We can always decompose a measure $\mu$ on the unit ball as $\mu=\mu_1+\mu_2$ where $\mu_1(\B \cap \rr)=0$ and $\supp(\mu_2)\subseteq \B \cap \rr$. Moreover $\mu$ is a 
Carleson measure if and only if the measures $\mu_1$ and $\mu_2$ are Carleson as well.

Thanks to the Disintegration Theorem (see Theorem 2.28 in \cite{ambrosio}), any finite measure $\mu$ on $\B$, such that $\mu(\B\cap\rr)=0$,  
can be uniquely decomposed as
\begin{equation}\label{radon}
d\mu(x+yI)=
d\mu^+_I(x+yI)d\nu(I)
\end{equation}
where $\nu$ is the measure on the sphere $\s$ defined by 
\[\nu(E)=\mu\left(\{x+yI \in \B\ |\ y>0 \text{ and } I\in E\}\right)\] 
\noindent and $\mu^+_I$ is a (probability) measure on $\B^+_I=\{x+yI\in \B_I \, : \, y \ge 0\}$. 
Hence we can write 
\[\int_{\B}\varphi(x+yI)d\mu(x+yI) = \int_{\s}\int_{\B^+_{I}}\varphi(x+yI) d\mu^+_{I}(x+yI) d\nu(I)\]
for any $\varphi: \B \to \HH$. If in general $\mu$ is a finite measure on $\B$, 
we can decompose $\mu=\mu_\rr+\tilde \mu$, where $\tilde \mu(\B\cap \rr)=0$ and $\supp \mu_\rr \subseteq \B\cap \rr$, 
so that
\begin{equation*}\label{krypton}\int_{\B}\varphi(x+yI)d\mu(x+yI) = \int_{\B\cap \rr}\varphi(x)d\mu_{\rr}(x)+\int_{\s}\int_{\B^+_{I}}\varphi(x+yI) d\tilde\mu^+_I(x+yI)d\nu(I)\tag{3.2},\end{equation*}
where $\tilde\mu^+_I$ is obtained from $\tilde\mu$ using the Disintegration Theorem.

A stronger property for a measure defined on the unit ball $\B$ is to be a Carleson measure on each slice $\B_I$.
For any $I\in \s$, let $\mu_I=\mu_\rr+\tilde\mu^+_{I}+\tilde\mu^+_{-I}$ be the ``restriction'' of $\mu$ to the slice $\B_I$. 

 A finite Borel measure $\mu$ is called {\em slice Carleson} for $H^2(\B)$ if there exists a constant $c(\mu)$ such that
\begin{align*}\label{slicecarleson}
 &\int_{\B_{I}}|f(x+yI)|^2d\mu_I(x+yI) \tag{3.3}\\
 &=\int_{\B\cap \rr}|f(x)|^2d\mu_{\rr}(x)+\int_{\B^+_{I}}|f(x+yI)|^2 d\tilde\mu^+_I(x+yI)+\int_{\B^+_{-I}}|f(x+y(-I))|^2 d\tilde\mu^+_{-I}(x+y(-I))\\
 &\le c(\mu)\| f\| ^2_{H^2(\B)} ,
\end{align*}
%
for any $f\in H^2(\B)$ and $I$ in $\SS$. 

For any $I\in \s$, the norm $\| f\| ^2_{H^2(\B)}$ can be interpreted as the complex $H^2$-norm of the restriction $f_I(x+yI)=\sum_{n=0}^{\infty}(x+yI)^na_n$ of $f$ to $\B_I$:
\[\| f\| ^2_{H^2(\B_I)}=\sum_{n=0}^\infty|a_n|^2=\| f\| ^2_{H^2(\B)}.\]


\begin{pro}\label{sliceimplica}
Let $\mu$ be a finite Borel measure. If $\mu$ is slice Carleson for the Hardy space $H^2(\B)$, then it is also Carleson. 
\end{pro}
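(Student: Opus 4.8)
The plan is to obtain the Carleson imbedding inequality for $\mu$ by integrating the slicewise inequality \eqref{slicecarleson} over the sphere $\s$ against $\nu$, with the disintegration formula \eqref{krypton} serving as the bridge between the two. Fix $f\in H^2(\B)$ and apply \eqref{krypton} to the nonnegative measurable function $\varphi=|f|^2$ (so that the identity holds in $[0,+\infty]$ by Tonelli):
\[
\int_\B |f(q)|^2\,d\mu(q)=\int_{\B\cap\rr}|f(x)|^2\,d\mu_\rr(x)+\frac12\int_\s\left(\int_{\B_I}|f(x+yI)|^2\,d\tilde\mu_I(x+yI)\right)d\nu(I).
\]
By the definition $\mu_I=\mu_\rr+\tilde\mu_I$, the hypothesis \eqref{slicecarleson} says precisely that $\int_{\B\cap\rr}|f|^2\,d\mu_\rr+\int_{\B_I}|f|^2\,d\tilde\mu_I\le c(\mu)\|f\|^2_{H^2(\B)}$ for every $I\in\s$; since both summands are nonnegative, each is separately bounded by $c(\mu)\|f\|^2_{H^2(\B)}$. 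In particular $\int_{\B\cap\rr}|f|^2\,d\mu_\rr\le c(\mu)\|f\|^2_{H^2(\B)}$, and the inner integral in the displayed identity is bounded, uniformly in $I\in\s$, by $c(\mu)\|f\|^2_{H^2(\B)}$.

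Feeding these two bounds into the identity and estimating the $\nu$-integral by the supremum of its integrand times the total mass $\nu(\s)$, one gets
\[
\int_\B|f(q)|^2\,d\mu(q)\le c(\mu)\|f\|^2_{H^2(\B)}+\frac12\,\nu(\s)\,c(\mu)\|f\|^2_{H^2(\B)}=c(\mu)\Bigl(1+\tfrac12\nu(\s)\Bigr)\|f\|^2_{H^2(\B)}.
\]
Since $\mu$ is finite, $\nu(\s)=\tilde\mu(\B)\le\mu(\B)<\infty$, so the right-hand side is a fixed multiple of $\|f\|^2_{H^2(\B)}$; thus $\mu$ is a Carleson measure for $H^2(\B)$, with constant at most $c(\mu)\bigl(1+\mu(\B)/2\bigr)$. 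Note also that the chain of estimates shows a posteriori that all the integrals involved are finite, which retroactively legitimizes writing \eqref{krypton} for $\varphi=|f|^2$.

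There is no real obstacle here once \eqref{krypton} is available; the only point that deserves attention is a bookkeeping one, coming from the normalization built into the disintegration \eqref{radon}. Because the conditional measures $\mu_I^+$ are probability measures on the half-slices, the slice $H^2(\B_I)$-norm against which \eqref{slicecarleson} tests does not record the weight $d\nu(I)$ that each slice carries, and consequently the passage from the slicewise estimate to the global one necessarily produces the extra factor $\nu(\s)$ — equivalently, the total mass of the off-real part $\tilde\mu$ of $\mu$. Any further sharpening of the constant would require a more careful analysis of how the mass is distributed among the slices, which is not needed for the qualitative statement.
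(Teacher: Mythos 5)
Your argument is correct and is essentially the paper's own proof: disintegrate $\mu$ via \eqref{krypton}, bound the inner slice integrals uniformly in $I$ by the slice Carleson hypothesis, and integrate over $\s$ using $\nu(\s)<\infty$. The only cosmetic difference is that the paper explicitly obtains the finiteness of $\mu$ (hence of $\nu(\s)$) by testing \eqref{slicecarleson} on the constant function $1$, whereas you assert it directly; it is worth making that one-line deduction explicit since the finiteness of $\nu(\s)$ is what your final constant depends on.
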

\begin{proof}
Using the  same notation as in \eqref{krypton}, since $\mu(\BB)=\nu(\s)<+\infty$ we can apply the Disintegration Theorem to write 
\begin{eqnarray*}\int_{\B}|f(q)|^2d\mu(q)&=&\int_{\B\cap \rr}|f(x)|^2d\mu_{\rr}(x)+ \int_{\s}d\nu(I)\int_{\B^+_{I}}|f(z)|^2d\tilde \mu^+_I(z)\crcr&
\lesssim& \| f\| ^2_{H^2(\B)}+ \nu(\s)\| f\| ^2_{H^2(\B)}\lesssim \| f\| ^2_{H^2(\B)}\end{eqnarray*}
for any $f\in H^2(\B)$.

\end{proof}
It is not difficult to find examples of infinite measures $\mu$ on $\B$ such that $\mu_I$ is a Carleson measure on each slice
$\B_I$.

As in the classical complex setting (see, e.g., \cite{Duren}), it is possible to give a characterization of Carleson measures in terms of ``boxes''.
For $q=re^{J\theta}\in \B$, denote by $A_I(q)$ the arc of $\p \B_I$ defined as
\[A_I(q)=\left\{e^{I\alpha}\in \p \B_I \ \colon  \ |\alpha - \theta| \le  1-r \right\},\]
and let $S_I(q)$ be the ``box'' in $\B_I$ defined by 
\[S_I(q)= \{\varrho e^{I\alpha}\in \B_I \ \colon \ e^{I\alpha}\in A_I(q), \,  0<1-\varrho \le 2(1- r)\}.\]
 \begin{defn}
Let $q\in \B$. The symmetric subset of $\B$ obtained as 
\[S(q)=\cup_{I\in\s}S_I(q)\] 
is called a {\em symmetric box}. 
\end{defn}
For slice Carleson measures the characterization follows quite easily from the classical case.
\begin{pro}\label{charslice}
A finite Borel measure $\mu$ on $\B$ is slice Carleson for $H^2(\B)$ if and only if for any $I\in \s$ and  $z\in\B_I$, $\mu_I(S_I(z))\lesssim |A_I(z)|$, 
where $|A_I(z)|$ denotes the length of the arc $A_I(z)$.
\end{pro}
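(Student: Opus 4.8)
The plan is to reduce Proposition~\ref{charslice} to the classical Carleson box condition on the complex disc $\B_I \cong \D$, being careful about the two-sided structure of the slice. First I would observe that, by the Splitting Lemma and the identity $\|f\|_{H^2(\B)}^2 = \|f_I\|_{H^2(\B_I)}^2$ recorded above, the slice-Carleson inequality \eqref{slicecarleson} for a fixed $I\in\s$ is \emph{exactly} the statement that the measure $\mu_I$ on $\B_I$ is a Carleson measure for the classical Hardy space $H^2(\B_I)$ of holomorphic $\HH$-valued functions on the disc $\B_I$. Writing $f_I = F + GJ$ with $F,G\colon\B_I\to L_I$ holomorphic and $J\perp I$, one has $|f_I|^2 = |F|^2 + |G|^2$ pointwise and $\|f_I\|_{H^2(\B_I)}^2 = \|F\|_{H^2}^2 + \|G\|_{H^2}^2$, so the vector-valued embedding inequality holds for $\mu_I$ if and only if the scalar one holds for each component; hence slice-Carleson for $H^2(\B)$ at the level of a single slice is equivalent to the classical scalar Carleson condition for $\mu_I$ on $\D$.

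Next I would invoke Carleson's theorem in the disc (see \cite{carleson}, \cite{Duren}): a positive measure on $\D$ is Carleson for $H^2(\D)$ if and only if its mass on every Carleson box is controlled by the side length of the box. The box $S_I(z)$ and arc $A_I(z)$ defined just above are precisely the Carleson boxes and boundary arcs on $\B_I$, with $|A_I(z)| \approx 1-|z|$ when $z = \varrho e^{I\alpha}$; so the classical criterion says $\mu_I$ is Carleson on $\B_I$ iff $\mu_I(S_I(z)) \lesssim |A_I(z)|$ for all $z\in\B_I$. The one subtlety is the definition of $\mu_I$ as $\mu_\rr + \tilde\mu_I$ where $\tilde\mu_I$ is glued from $\mu^+_I$ on $\B_I^+$ and $\mu^+_{-I}$ on $\B^+_{-I}$ via the Radon--Nikodym disintegration \eqref{radon}; but since a Carleson box $S_I(z)$ meets only finitely many such pieces and the condition is local and uniform, testing over all boxes in $\B_I$ for all $I\in\s$ captures exactly the two half-slice measures as well as the real part $\mu_\rr$. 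Thus the equivalence holds slice by slice, and quantifying over all $I\in\s$ gives the Proposition.

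The only point requiring genuine care is the bookkeeping of the measures: making sure that the uniform-in-$I$ box condition $\mu_I(S_I(z))\lesssim|A_I(z)|$ is equivalent to the uniform-in-$I$ embedding inequality \eqref{slicecarleson}, with constants that do not deteriorate as $I$ varies. This is handled by noting that the constants in Carleson's disc theorem are absolute (they do not see the isometric identification $L_I\cong\C$), and that the decomposition $\mu_I = \mu_\rr + \tilde\mu_I$ is compatible with restriction to boxes. I expect this measure-theoretic matching to be the main (though still routine) obstacle; the analytic content is entirely imported from the classical one-variable result.
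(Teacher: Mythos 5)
Your proposal is correct and follows essentially the same route as the paper: restrict to $L_I$-valued functions (via the Splitting Lemma and the orthogonality identity $|f_I|^2=|F|^2+|G|^2$) to reduce the slice embedding for $\mu_I$ to the scalar Carleson embedding on $\D$ in both directions, then invoke the classical Carleson box characterization; your extra remarks on the uniformity of constants in $I$ and on the gluing of $\mu_I$ from $\mu_\rr$ and the half-slice pieces are harmless refinements of what the paper leaves implicit.
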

The main ingredients of the proof are the classical Carleson Theorem (see, e.g., Theorem 9.3 in \cite{Duren}) and the Splitting Lemma \ref{split}.
%

The characterization theorem for general Carleson measures requires the following technical result.
\begin{lem}\label{stimamoebius}
Let $w\in \D$ and let 
$s(w)=\{z \in \D \colon \ 1-|z| \le 2(1-|w|), |\arg(z)-\arg(w)|\le 1-|w|\}$. 
Then there exists a constant $c>0$, such that 
\[\frac{1}{c}\le \left|\frac{1-z\frac{w+\bar w}{2}}{1-zw}\right|\le c\]
for any $z\in s(w)$. 
\end{lem}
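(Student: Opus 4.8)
The plan is to convert the two–sided bound into two elementary one–variable estimates by means of the identity
$$
1-z\,\frac{w+\bar w}{2}=\frac12\bigl[(1-zw)+(1-z\bar w)\bigr],
$$
which is immediate from $\frac{w+\bar w}{2}=\RRe w$. Writing $h:=1-|w|$, a point $z\in s(w)$ satisfies only $1-2h\le|z|<1$, $|w|=1-h$, and $|\arg z-\arg w|\le h$; these are all I will use.

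First I would record three routine estimates valid for $z\in s(w)$. \emph{(i)} Since $z\bar w=|z|(1-h)e^{i\tau}$ with $|\tau|\le h$, splitting $1-z\bar w$ into a radial and an angular part gives
$$
|1-z\bar w|\le\bigl(1-|z|(1-h)\bigr)+|z|(1-h)\,|1-e^{i\tau}|\le 3h+h=4h,
$$
the bound $1-|z|(1-h)\le 3h$ being a one–line computation from $|z|\ge 1-2h$. \emph{(ii)} $|1-zw|\ge 1-|z|\,|w|\ge 1-|w|=h$. \emph{(iii)} Because $\RRe w$ is a real number, $\bigl|1-z\,\RRe w\bigr|\ge\RRe\bigl(1-z\,\RRe w\bigr)=1-\RRe(z)\,\RRe(w)\ge 1-|z|\,|w|\ge h$.

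The upper bound then follows from the identity together with (i) and (ii):
$$
\Bigl|1-z\,\tfrac{w+\bar w}{2}\Bigr|\le\tfrac12\bigl(|1-zw|+|1-z\bar w|\bigr)\le\tfrac12\bigl(|1-zw|+4h\bigr)\le\tfrac52\,|1-zw|.
$$
For the lower bound I would split into two cases. If $|1-zw|\le 10h$, then (iii) gives directly $\bigl|1-z\,\tfrac{w+\bar w}{2}\bigr|\ge h\ge\tfrac1{10}\,|1-zw|$. If $|1-zw|>10h$, then (i) gives $|1-z\bar w|\le 4h<\tfrac25\,|1-zw|$, so from the identity
$$
\Bigl|1-z\,\tfrac{w+\bar w}{2}\Bigr|\ge\tfrac12\bigl(|1-zw|-|1-z\bar w|\bigr)\ge\tfrac3{10}\,|1-zw|.
$$
In both cases the quotient is at least $\tfrac1{10}$, so the lemma holds with the universal constant $c=10$.

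The only point needing a little attention is the lower bound in the regime where $|1-zw|$ is itself comparable to $h$ — geometrically, when $w$, and hence $z$, lies near the real axis, so that $1-zw$, $1-z\bar w$ and $1-z\,\RRe w$ are all comparably small. This is exactly the first case above, handled by observing through (iii) that $|1-z\,\RRe w|$ can never fall below $h$, for the trivial reason that its real part is $1-\RRe(z)\RRe(w)\ge h$. Everything else is elementary, and the constant obtained is independent of $w$.
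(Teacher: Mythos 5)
Your proof is correct, and it takes a genuinely different and considerably more economical route than the paper's. The paper argues by cases on the position of $w$: when $w$ is near the real axis it expands $1-zw$ and $1-z\,\RRe w$ asymptotically in the parameters $\varepsilon,\varepsilon',\delta,\delta'$ (obtaining along the way the sharper two-sided asymptotic $|1-zw|\approx|1-z\,\RRe w|\approx\varepsilon+\delta$); when $w$ is far from the real axis it studies the fractional linear transformation $z\mapsto\frac{1-z\,\RRe w}{1-zw}$ and the image of the upper half-plane under it; and when $|w|$ is bounded away from $1$ it uses trivial bounds. You replace all three cases by the single identity $1-z\,\RRe w=\frac12\bigl[(1-zw)+(1-z\bar w)\bigr]$ combined with the two observations that $|1-zw|\ge 1-|w|=h$ always, while $|1-z\bar w|\le 4h$ on the Carleson box $s(w)$ (your radial/angular splitting of $1-z\bar w$ is correct, as is the inequality $\RRe(z)\RRe(w)\le|z|\,|w|$ used in step (iii)); the dichotomy $|1-zw|\le 10h$ versus $|1-zw|>10h$ then closes the lower bound. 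I verified each step; the argument is complete and yields the explicit universal constant $c=10$, which the paper's proof does not provide. What the paper's longer computation buys is the finer quantitative information in case (a), which is not needed for the lemma as stated; for the purpose of proving the lemma your argument is preferable.
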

\begin{proof}
We can suppose $\RRe w >0$, since the statement is invariant by multiplication by $-1$, and $\IIm w \ge 0$, since $\overline{s(w)}=s(\bar w)$. 
We will write $a+ib\approx \alpha+i\beta$ if $\frac{1}{c}|\alpha|\le \left| a \right|\le c |\alpha|$ and $\frac{1}{c}|\beta|\le \left| b \right|\le c|\beta|$ for some positive constant $c$. Hence $a+ib\approx \alpha+i\beta$ implies $|a+ib|\approx |\alpha+i\beta|$.\\

\noindent (a) Consider first the case where $w$ is near the real axis.
Let $w=(1-\varepsilon )e^{i\delta}$ and $z=(1-\varepsilon ')e^{i(\delta'+\delta)}$ where $0<\varepsilon \le \varepsilon _0$ for a fixed $\varepsilon _0>0$,  $0\le \delta \le \delta_0$ and $|\delta'|, \varepsilon '\le  \varepsilon $.
For $\varepsilon _0$ and $\delta_0$ small enough we have $\varepsilon +\varepsilon '+\varepsilon  \varepsilon ' \approx \varepsilon +\varepsilon '$ and $(2\delta+\delta')^2+|2\delta+\delta'|\approx |2\delta+\delta'|$, so that
\begin{align*}
1-zw&=1-(1-\varepsilon )e^{i\delta}(1-\varepsilon ')e^{i(\delta+\delta')}=1-(1-\varepsilon )(1-\varepsilon ')+(1-\varepsilon )(1-\varepsilon ')\left(1-e^{i(2\delta + \delta')}\right)\\
&\approx \varepsilon  + \varepsilon ' + 1-\cos(2\delta+\delta')-i\sin(2\delta+\delta')\approx  \varepsilon  + \varepsilon ' +(2\delta+\delta')^2-i(2\delta+\delta').
\end{align*}
With regard to the norm:
\begin{align*}
|1-zw|\approx \varepsilon  + \varepsilon ' +(2\delta+\delta')^2+|2\delta+\delta'|\approx \varepsilon+ |2\delta+\delta'|.
\end{align*}
We have also
\begin{align*}
& 1-z\frac{w+\bar w}{2}=1-(1-\varepsilon )(1-\varepsilon ')e^{i(\delta+\delta')}\cos \delta\\
&=1-(1-\varepsilon )(1-\varepsilon ')+(1-\varepsilon )(1-\varepsilon ')\left(1-\cos \delta\right) +(1-\varepsilon )(1-\varepsilon')\cos \delta \left(1-e^{i(\delta+\delta')}\right)\\
&\approx \varepsilon+\varepsilon'+\delta^2+1-\cos(\delta+\delta')-i\sin(\delta+\delta')
\end{align*}
which, in terms of norm, implies
\begin{align*}
\left|1-z\frac{w+\bar w}{2}\right|&\approx \varepsilon+\varepsilon'+\delta^2+(\delta+\delta')^2+|\delta+\delta'|\approx \varepsilon+\delta^2+|\delta+\delta'|. 
\end{align*}
Moreover, if 
$|2\delta+\delta'|\ge 5\varepsilon$, since $|\delta'|\le \varepsilon$, we necessarily get $\delta\ge 2\varepsilon$.
Therefore 
\[|\delta+\delta'|\approx |2\delta+\delta'| \approx \delta \ge \delta ^2   \]
i.e.
\[|1-zw|\approx\left|1-z\frac{w+\bar w}{2}\right|\approx \varepsilon+\delta.\]
If $|2\delta+\delta'|\le 5\varepsilon$, then $|\delta|\le \frac {5\varepsilon}{2}$ and
\[|1-zw|\approx\left|1-z\frac{w+\bar w}{2}\right|\approx \varepsilon \approx \varepsilon+\delta.\]
\\
(b) Consider now the case where $w$ is far away from the real axis.
Let $\varphi:\hat \C \to \hat \C$ be the fractional linear transformation defined by 
\[z\mapsto \frac{1-z\frac{w+\bar w}{2}}{1-zw}.\]
The real axis is mapped by $\varphi$ to the circle $\mathcal C$ passing through $\varphi(0)=1$, $\varphi(1)=\frac{1-\frac{w+\bar w}{2}}{1-w} $, and $\varphi\left(\frac{2}{w+ \bar w}\right)=0$; the upper half-space is mapped inside the disc bounded by $\mathcal C$ (since, for instance, $\varphi\left( \frac{1}{\bar w} \right)=\frac 1 2 $). This directly implies that for any $z\in s(w)$, if $w=(1-\varepsilon)e^{i\delta}$ with $0\le\varepsilon\le \varepsilon_0$ and $\delta\ge \varepsilon_0$, we have $|\varphi(z)|\le c$ for some constant $c$.

\noindent Hence we are left to show that when $z\in s(w)$, $|\varphi(z)|$ is bounded away from $0$. 
Consider
\begin{align*}
\left|\varphi(z)-\frac 1 2 \right|=\left|\frac{1-z\frac{w+\bar w }{2}}{1-zw}-\frac 1 2 \right|=\frac{1}{2}\left|\frac{1-z\bar w }{1-zw}\right|.
\end{align*}
Let $\varepsilon_0$ be sufficiently small and $\frac{\pi}{2}\ge\delta \ge \delta_0$. For any $w=(1-\varepsilon)e^{i\delta}$ and for any $z\in s(w)$ we have that $\left|z -\frac{1}{\bar w}\right| \le 
\frac 1 2 \left|z -\frac{1}{ w}\right|$ which implies  $\left|\varphi(z)-\frac 1 2 \right|\le \frac 1 4$ and hence 
$|\varphi(z)|\ge 1/4$.
\\
(c) The last case to consider is when $|w|\le 1-\varepsilon _0$.
We have the following estimates
\begin{align*}
\frac{1-|w|}{1+|w|}\le \left|\frac{1-z\frac{w+\bar w}{2}}{1-zw}\right|\le \frac{1+|w|}{1-|w|},
\end{align*}
which, setting $r_0=1-\varepsilon_0$, lead to  
\[\frac{1-r_0}{1+r_0}\le |\varphi(z)|\le \frac{1+r_0}{1-r_0},\]
thus concluding the proof.
\end{proof}
The announced characterization result can then be stated as follows.
\begin{teo}
A measure $\mu$ on $\B$ is a Carleson measure for the Hardy space $H^2(\B)$ if and only if for any $q=re^{J\theta}\in \B$, the measure of the symmetric box $S(q)$ satisfies 
\[\mu(S(q))\lesssim |A_I(q)|,\] 
where $I$ is any imaginary unit and $|A_I(q)|$ denotes the length of the arc $A_I(q)$.
\end{teo}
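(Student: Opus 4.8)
The plan is to prove the two implications separately, using the slice decomposition \eqref{krypton} and Lemma \ref{stimamoebius} to bridge between the intrinsic symmetric boxes $S(q)$ and the slice-wise boxes $S_I(z)$ already handled in Proposition \ref{charslice}.

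\smallskip
\noindent\emph{The ``only if'' part.} Suppose $\mu$ is Carleson for $H^2(\B)$. Fix $q=re^{J\theta}\in\B$ and an imaginary unit $I$. I would test the Carleson inequality against a suitable ``bump'' built from the reproducing kernel. The natural candidate is $f(q')=k_w(q')=(1-q'\overline w)^{-\star}$ with $w=r e^{I\theta}\in\B_I$ (note $|w|=r=|q|$), whose $H^2(\B)$-norm squared is $\|k_w\|_{H^2(\B)}^2=k_w(w)=(1-|w|^2)^{-1}\approx(1-|q|)^{-1}$. The key point is a lower bound $|k_w(q')|\gtrsim (1-|q|)^{-1}$ for all $q'\in S(q)$. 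To see this, write $q'=x+yL\in S_L(q)$ for some $L\in\s$; by Proposition \ref{trasf}, $k_w\star$-evaluated can be related to a pointwise value, but more directly I would use the Representation Formula together with the fact that $k_w$ restricted to each slice is essentially a complex Szeg\H{o} kernel. Concretely: on the slice $L_I$ itself, $k_w|_{\B_I}(z)=(1-z\overline w)^{-1}$ (standard complex Szeg\H{o} kernel), and Lemma \ref{stimamoebius}, applied with $w$ replaced by $\overline w$ (using $\overline{s(w)}=s(\bar w)$ and that $\tfrac{w+\bar w}{2}$ is real), shows $|1-z\overline w|\approx|1-z\RRe(w)|\approx 1-r$ for $z\in S_I(q)$; hence $|k_w(z)|\approx (1-r)^{-1}$ there. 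For a general slice $L_L$, the Representation Formula expresses $k_w(x+yL)$ as an affine (in $L$) combination of $k_w(x+yI)$ and $k_w(x-yI)$, both of which have modulus $\approx(1-r)^{-1}$ when $x\pm yI\in S_I(q)$ — which holds precisely because $S(q)$ was defined symmetrically so that $x+yL\in S_L(q)\iff x\pm yI\in S_I(q)$. A short estimate of the affine combination (whose coefficients are bounded, since $|L|=|I|=1$) gives the uniform lower bound $|k_w(q')|\gtrsim(1-r)^{-1}$ on all of $S(q)$. Then
\[
\frac{c(\mu)}{1-|q|}\approx c(\mu)\|k_w\|_{H^2(\B)}^2\ \ge\ \int_\B|k_w|^2\,d\mu\ \ge\ \int_{S(q)}|k_w|^2\,d\mu\ \gtrsim\ \frac{\mu(S(q))}{(1-|q|)^2},
\]
which yields $\mu(S(q))\lesssim 1-|q|\approx|A_I(q)|$.

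\smallskip
\noindent\emph{The ``if'' part.} Conversely, suppose $\mu(S(q))\lesssim 1-|q|$ for all $q$. I would first reduce, via the decomposition $\mu=\mu_\rr+\tilde\mu$ and \eqref{krypton}, to the two pieces separately; the real part $\mu_\rr$ supported on $\B\cap\rr$ is trivially Carleson since its boxes are intervals and $\mu_\rr(\B\cap\rr\cap S(q))\lesssim 1-|q|$ is exactly a one-dimensional Carleson condition. For $\tilde\mu$, the goal is to show $\tilde\mu$ is in fact \emph{slice} Carleson, and then invoke Proposition \ref{sliceimplica}. By Proposition \ref{charslice} it suffices to check $\tilde\mu_I(S_I(z))\lesssim|A_I(z)|$ for each $I\in\s$ and $z\in\B_I$. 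Here is where Lemma \ref{stimamoebius} re-enters, but now used the other way: the box $S_I(z)$ in the slice $\B_I$ is, up to the bounded distortion controlled by the lemma, comparable to $s(w)\cong S_I(z)$, and more importantly, $S_I(z)\subseteq S(z')$ for an appropriate $z'$ with $1-|z'|\approx 1-|z|$ — indeed $S_I(z)$ is one slice-slice of the symmetric box $S(z)$ itself (by definition $S(z)=\cup_{L\in\s}S_L(z)$). Since $\tilde\mu_I$ restricted to $\B_I^+$ is just the conditional measure $\tilde\mu^+_I$ of mass $d\nu(I)$, one has $\tilde\mu_I(S_I(z))\le (\text{const})\,\tilde\mu(S(z))/\nu(\text{small cap})$ — this is the delicate point and must be handled by testing against genuinely $2$-variable Carleson boxes rather than slices. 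The clean way around it: show directly that $\mu(S(q))\lesssim 1-|q|$ implies the $H^2(\B)$ imbedding by the standard stopping-time / dyadic decomposition argument (à la the proof of classical Carleson's theorem in \cite{Duren}, Theorem 9.3), working on $\B$ with the family of symmetric boxes $\{S(q)\}$, which form a genuine ``Carleson box'' structure: nested, covering, with controlled overlaps. One decomposes $\{|f|>\lambda\}\cap\B$ into maximal symmetric boxes and sums the geometric series, using the reproducing-kernel estimate above to control $\mu$-measure of level sets by $H^2$ norms.

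\smallskip
\noindent The main obstacle is the ``only if'' direction's uniform lower bound for $|k_w|$ on the whole symmetric box $S(q)$ — it is exactly this step that ``requires some care'', as flagged in the introduction, because one must pass from the single slice $\B_I$, where Lemma \ref{stimamoebius} applies verbatim, to all slices simultaneously via the Representation Formula, and check that the affine-in-$I$ structure of regular functions does not let $|k_w|$ decay. Equivalently, this is the assertion that the invariant/Szeg\H{o} metric is comparable across all copies of $\s$ sitting inside $\B$ at a fixed radius, which is the geometric fact cited from \cite{metrica}. Everything else is a routine transcription of the classical disc argument, slice by slice, glued by the Radon--Nikodym decomposition \eqref{radon}.
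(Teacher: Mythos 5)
Your overall architecture (kernel test functions for necessity, reduction to slice/classical Carleson for sufficiency) matches the paper's, but the ``only if'' direction breaks at exactly the step you flag as the main obstacle, and the claim you make there is false: the \emph{single} reproducing kernel $k_w$, $w=re^{I\theta}$, does not satisfy $|k_w|\gtrsim (1-r)^{-1}$ on all of $S(q)$. The Representation Formula gives $k_w(x+yL)=\tfrac{1-LI}{2}\,k_w(x+yI)+\tfrac{1+LI}{2}\,k_w(x-yI)$, and while $x+yI\in S_I(q)$ does give $|k_w(x+yI)|\approx(1-r)^{-1}$, the conjugate point does not: your equivalence ``$x+yL\in S_L(q)\iff x\pm yI\in S_I(q)$'' is wrong ($x-yI$ lies in $S_{-I}(q)$, not in $S_I(q)$), and $k_w(x-yI)=(1-(x-yI)\bar w)^{-1}$ has $|1-(x-yI)\bar w|\approx (1-r)+|\alpha+\theta|$, which is of order $\theta$, not $1-r$, once $w$ is away from the real axis. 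Concretely $\bar w\in S_{-I}(w)\subset S(w)$ and $|k_w(\bar w)|=|1-w^2|^{-1}$, which for $\theta=\pi/2$ is bounded. Worse, as $L\to -I$ the coefficient $\tfrac{1-LI}{2}$ of the large term tends to $0$, so $|k_w|$ collapses to $\approx\theta^{-1}$ on a whole neighbourhood of the slice $L_{-I}$ inside $S(q)$; a measure concentrated there defeats your test function. Bounded coefficients in an affine combination give upper bounds, never the lower bound you need. The paper's fix is to symmetrize: it tests against the spherical average of kernels, which by the Representation Formula equals $K=\tfrac12(k_{u+vJ}+k_{u-vJ})$ for every $J$, so that on \emph{each} slice $K(z)=\frac{1-z(w+\bar w)/2}{(1-zw)(1-z\bar w)}$; Lemma \ref{stimamoebius} is precisely the statement that the numerator compensates the ``far'' factor $1-zw$ without cancellation, giving $|K|\gtrsim(1-|w|^2)^{-1}$ uniformly on $S(w)$ at the harmless cost $\|K\|_{H^2(\B)}\le\|k_w\|_{H^2(\B)}$. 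That is the ``care'' the introduction refers to, and it is not recoverable from one kernel plus boundedness of the Representation Formula coefficients.

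The ``if'' direction is also not carried through: the inequality $\tilde\mu_I(S_I(z))\lesssim\tilde\mu(S(z))/\nu(\text{cap})$ is, as you suspect, unavailable, and the stopping-time alternative is only named, not executed. The paper's route is concrete and you should adopt it: project every slice measure $\mu_I$ onto one fixed slice $\B_{J_0}$; the symmetric-box hypothesis says exactly that the total projected measure $\int_\s d\mu_I^{proj}\,d\nu(I)$ satisfies the classical one-variable box condition on $\B_{J_0}$, hence is Carleson there; then the Representation Formula bounds $|f(x+yI)|^2\lesssim|f(x+yJ_0)|^2+|f(x-yJ_0)|^2$, the second term being handled via $|f(\bar q)|=|f^c(q)|$ and $\|f^c\|_{H^2(\B)}=\|f\|_{H^2(\B)}$.
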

\begin{proof}
Consider first the case in which $\supp \mu \subseteq (\B\cap \rr)$. In this case $\mu$ is a Carleson measure if and only if it is slice Carleson: since $\B \cap \rr$ is contained in each slice $\B_I$ we have
\[\int_{\B_I}|f(z)|^2d\mu(z)=\int_{\B}|f(q)|^2d\mu(q)\lesssim ||f||^2_{H^2(\B)}\]
for any $I \in \s$. Therefore the statement follows by Proposition \ref{charslice}.

Let now $\mu (\B\cap \rr)=0$.  
Suppose first that $\mu$ is Carleson for $H^2(\B)$ and let $w=u+v{I_w}\in \B$.
Consider the function 
\[K(q):=\frac{1}{A(\s)}\int_{\s}k_{u+vI}(q)dA_{\s}(I)\]
where $k_{u+vI_w}(q)=k_w(q)=(1-q\bar w)^{-*}$ is the reproducing kernel of $H^2(\B)$ and $dA_{\s}$ denotes the usual area element on the sphere $\s$.
Then, using the fact that $k_w(q)=\overline{k_q(w)}$ for any $w,q \in \B$ and the Representation Formula, we can write 
\begin{align}
 K(q)&=\frac{1}{A(\s)}\int_{\s}k_{u+vI}(q)dA_{\s}(I)=\frac{1}{A(\s)}\int_{\s}\overline{k_q(u+vI)} dA_{\s}(I) \nonumber \\
&=\frac{1}{A(\s)}\int_{\s}\left( \overline{\frac{1-IJ}{2}k_q(u+vJ)+\frac{1+IJ}{2}k_q(u-vJ)}\right)dA_{\s}(I)\nonumber \\
& =\frac{1}{A(\s)}\left(\int_{\s}\overline{k_q(u+vJ)} \frac{1-JI}{2}dA_{\s}(I)+\int_{\s}\overline{k_q(u-vJ)}\frac{1+JI}{2}dA_{\s}(I)\right) \nonumber \\
&=\frac{1}{2}\left(k_{u+vJ}(q)+k_{u-vJ}(q) \right) \label{kappa}
\end{align}
where $J$ is any imaginary unit.
The function $K(q)\in H^2(\B)$ since it is the superposition of functions in $H^2$ and, for any $J\in\s$,
\[\|K\|_{H^2(\B)}\le \frac{1}{2}\left(\|k_{u+vJ}\|_{H^2(\B)}+\|k_{u-vJ}\|_{H^2(\B)}\right)=\|k_{u+vJ}\|_{H^2(\B)}.\] 
Then, since $\mu$ is a Carleson measure, we can write
\begin{equation}\label{maggiorazione}
\int_{\B}|K(q)|^2d\mu(q) \lesssim \|K\|^2_{H^2(\B)}\le \|k_{u+vJ}\|^2_{H^2(\B)}=\frac{1}{1-(u^2+v^2)}=\frac{1}{1-|w|^2}\le \frac{1}{1-|w|}.
\end{equation} 
Let us study the modulus $|K(q)|$, when $q$ belongs to the symmetric box $S(w)$.
Recalling equation \eqref{kappa} together with the fact that we can choose the imaginary unit $J$ appearing in \eqref{kappa} to be the same as the one of $q$, we are left to estimate the modulus of $K$ slicewise, namely we need to estimate the quantity 
\[|K(z)|=\frac{1}{2}\left|k_w(z)+k_{\bar w}(z) \right|=\frac{1}{2}\left| \frac{1}{1-z\bar w}+\frac{1}{1-zw} \right|=\left| \frac{1-z\frac{w+\bar w}{2}}{1-zw}\right|\left|\frac{1}{1-z\bar w} \right|\]
with $z\in S_{I_w}(w)$.
Thanks to Lemma \ref{stimamoebius} and to classical estimates of the reproducing kernel of the complex Hardy space (see e.g. the proof of Theorem 9.3 in \cite{Duren}) we obtain that for any $J\in \s$
\[|K(x+yJ)|\gtrsim \frac{1}{1-|w|^2}.\]
With the notations of \eqref{krypton}, 
we get
\begin{align}\label{minorazione}
\int_{\B}|K(q)|^2d\mu(q)&=\int_{\s}\int_{\B^+_{I}}|K(x+yI)|^2d\mu^+_I(x+yI)d\nu(I)\nonumber  \\
&\ge \int_{\s}\int_{S_{I}(w)}|K(x+yI)|^2d\mu^+_I(x+yI)d\nu(I)  \nonumber \\
&\gtrsim \int_{\s} \int_{S_{I}(w)} \frac{1}{(1-|w|^2)^2}d\mu^+_I(x+yI)d\nu(I) = \frac{\mu(S(w))}{(1-|w|^2)^2}.  \tag{3.6}
\end{align}
Comparing equations (3.5) and \eqref{minorazione} we conclude
\[\mu(S(w))\lesssim \frac{(1-|w|^2)^2}{1-|w|}\le  
1-|w| \approx |A_{I}(w)|. \]

Let us now suppose that $\mu$ is a measure on $\B$ such that for any symmetric box $S(q)$, $\mu(S(q))\lesssim |A_{I}(q)|$. 
With the notation of (3.2), 
the hypothesis on $\mu$ yields that, for any $w\in \B$,
\begin{align}
|A_{I}(w)|&\gtrsim \mu(S(w))=\int_{S(w)}d\mu(q)=\int_{\s}\Big(\int_{S_{I}(w)}d\mu^+_I(z)\Big)d\nu(I)\nonumber \\ 
&=\int_{\s}\Big(\int_{S_{J_0}(u+vJ_0)}d{\mu^+}^{proj}_I(x+yJ_0)\Big)d\nu(I)\nonumber
\end{align}
where $J_0$ is any (fixed) imaginary unit, $S_{J_0}(w)$ is the projection of the symmetric box $S(w)$ onto the fixed semi-disc $\B^+_{J_0}$ and ${\mu^+}^{proj}_I$ is the projection of the measure $\mu^+_I$ on the same slice,
\[{\mu^+}_I^{proj}(E)=\mu^+_I(\{x+yI \, \colon \, y>0 \text{ and } x+yJ_0 \in E\}), \ \ d{\mu^+}^{proj}_I(x+yJ_0)=d\mu^+_I(x+yI)\] 
for any $E\subseteq \B^+_{J_0}$. 
Then the measure 
\[\int_{\s}d{\mu^+}^{proj}_I(x+yJ_0)d\nu(I)\] 
is Carleson for $H^2(\B_{J_0})$.
Let $f\in H^2(\B)$.
Using the Representation Formula, if $I$ denotes the imaginary unit of $q$, and $J_0$ is any imaginary unit, $J_0\neq \pm I$, we have
\begin{equation*}
\begin{aligned}
&\int_{\B}|f(q)|^2d\mu(q)
\lesssim \int_{\s}\int_{\B^+_{I}}\left(|f(x+yJ_0)|^2+|f(x-yJ_0)|^2\right)d\mu^+_I(x+yI)d\nu(I)\\
&= \int_{\B^+_{J_0}}|f(x+yJ_0)|^2\int_{\s}d{\mu^+}^{proj}_I(x+yJ_0)d\nu(I)+ \int_{\B^+_{J_0}}|f^c(x+yJ_0)|^2\int_{\s}d{\mu^+}^{proj}_I(x+yJ_0)d\nu(I),
\end{aligned}
\end{equation*}
where we used the fact that $|f(\bar q)|=|\overline{f(\bar q)}|=|f^c(q)|$.
Since the regular conjugate $f^c(q)=\sum_{n\ge 0}q^n\overline{a_n}$ of a function $f(q)=\sum_{n\ge 0}q^n a_n \in H^2(\B)$ belongs to $H^2(\B)$ as well (see [7]) and has same $H^2$-norm, thanks to Proposition 3.3 and to the fact that the $H^2$-norm is the same on each slice, we thus conclude
\[\int_{\B}|f(q)|^2d\mu(q)\lesssim \| f\| ^2_{H^2(\B_{J_0})}+\| f^c\| ^2_{H^2(\B_{J_0})}= 2\| f\| ^2_{H^2(\B)}.\]

\end{proof}

\section{Hankel bilinear forms}\label{hankelsection}
Let $b:\B \to \HH$ be a regular function. The {\em Hankel operator} associated with the symbol $b$ is the bilinear operator $T_b: H^2(\B)\times H^2(\B)\to \HH$ defined by
\[T_b(f,g)=\langle f\star g, b \rangle_{H^2(\B)}\]
for any $f,g \in H^2(\B)$. 

As stated in the Introduction, a natural problem in this setting is to find necessity and sufficiency conditions on the symbol $b$ so that the corresponding $T_b$ is a bounded operator,  i.e. so that there exists a constant $c(b)$ depending only on $b$ such that 
\[|T_b(f,g)|\le c(b) ||f||_{H^2(\B)}||g||_{H^2(\B)}\]
for any $f,g\in H^2(\B)$.

First, let us show that the boundedness of $T_b$ is equivalent to the boundedness of the operator $\Gamma_\alpha$  with $\alpha=\{ \overline{\check b}_n \}$, thus proving the first equivalence of Theorem \ref{eboli}. 
\begin{pro}
Let $b(q)=\sum_{n\ge 0}q^n\hat{b}_n$ be a regular function on $\B$ and let $\alpha=\{\alpha_n\}_{n}$ be a quaternionic sequence such that $\overline{\alpha_n}=\check b_n $ for any $n$. Then the operator $T_b: (f,g)\mapsto \langle f\star g, b \rangle_{H^2(\B)}$ is bounded on $H^2(\B) \times H^2(\B)$ if and only if the operator \[\Gamma_\alpha: \{a_{n}\}_n \mapsto \left\{\sum_{k\ge 0}\alpha_{n+k} a_k\right\}_n\] is bounded on $\ell^2(\N, \HH)$. 
Moreover 
\[
 \left\|\Gamma_\alpha\right\|_{\BBB(\ell^2(\N,\HH))}= \sup_{f,g\ne0}\frac{\left|\left<f\star g,b\right>_{H^2(\BB)}\right|}{\|f\|_{H^2(\BB)}\cdot\|g\|_{H^2(\BB)}}.\]
 \end{pro}
 \begin{proof}
Let $G_{\alpha}$ be the bilinear operator associated with $\Gamma_{\alpha}$, defined 
on $(c=\{c_n\}_{n\in \N}, d=\{d_n\}_{n\in \N})$ in $\ell^2(\N, \HH)\times\ell^2(\N, \HH)$ by    

 \[G_{\alpha}(c,d):=\left\langle d ,  \overline{\Gamma_{\alpha} c}\right\rangle_{\ell^2(\N, \HH)}=\sum_{n\ge 0}\sum_{k\ge 0 }\alpha_{n+k} c_k d_n .\]
 It is not difficult to see that $G_{\alpha}$ is bounded if and only if $\Gamma_{\alpha}$ is, and 
 \[\sup_{c,d\ne 0}\frac{\left|G_\alpha(c,d)\right|}{\|c\|_{\ell^2(\N, \HH)}\cdot\|d\|_{\ell^2(\N,\HH)}}=\|\Gamma_{\alpha}\|_{\mathcal B(\ell^2(\N, \HH))}.\]
 Let $f(q):=\check{c}(q)=\sum_{n\ge 0}q^nc_n$ and $g(q):=\check{d}(q)=\sum_{n\ge 0}q^nd_n$ be functions in $H^2(\B)$. Then
 \begin{equation}\label{minore}
 G_{\alpha}(c,d)=\sum_{n\ge 0 }\sum_{j\ge 0 }\alpha_{j} c_{j-n}d_n =\sum_{j\ge 0 }\alpha_{j}\sum_{n= 0 }^j c_{j-n} d_n =\left\langle  f\star g, b \right\rangle_{ H^2(\B)}
 \end{equation}
 and 
 \[\sup_{c,d \ne 0}\frac{\left|G_\alpha(c,d)\right|}{\|c\|_{\ell^2(\N, \HH)}\cdot\|d\|_{\ell^2(\N,\HH)}}=\sup_{f,g\ne0}\frac{\left|\left<f\star g,b\right>_{H^2(\BB)}\right|}{\|f\|_{H^2(\BB)}\cdot\|g\|_{H^2(\BB)}}\]
 which concludes the proof.
 \end{proof}

When a viable factorization theory is not available, and this is one such case, a more general tool was developed in the important article \cite{coifmanrochbergweiss}. Consider the weak $\star $-product space $H^2(\B)\prodstella H^2(\B)$, namely the space of all linear combinations of $\star $-products of pairs of functions in $H^2(\B)$, 
\[H^2(\B)\prodstella  H^2(\B)=\Big\{ \Phi:\B\to \HH \ \colon \ \Phi(q)=\sum_{j}f_j\star g_j(q), \text{ with } f_j, g_j\in H^2(\B)\Big\}\]
endowed with the norm
\[\| \Phi\prodstellanorma=\inf\Big\{\sum_j\| f_j\| _{H^2(\B)}\| g_j\| _{H^2(\B)} \ \colon \ \Phi=\sum_{j}f_j\star g_j, \text{ with } f_j,g_j\in H^2(\B)\Big\}.\]

The first step in studying the boundedness of the operator $T_b$ is the following.
\begin{teo}\label{bduale} 
Let $b:\B\to \HH$ be a regular function and set $\Lambda_b(h):=\left\langle h,b\right\rangle_{H^2(\B)}$ when $h$ is regular in $\B$. 
Then, there exists a constant $c(b)$, depending only on $b$, such that
\[|T_b(f,g)|\le c(b)\| f\| _{H^2(\B)}\| g\| _{H^2(\B)}\]
for any $f,g\in H^2(\B)$, if and only if $\Lambda_b$ belongs to the dual space $\left(H^2(\B)\prodstella H^2(\B)\right)^{*}$.

On the other hand, for all $\Lambda$ in $\left(H^2(\B)\prodstella H^2(\B)\right)^{*}$ there exists a unique regular $b:\B\to\HH$ such that 
$\Lambda=\Lambda_b$.
Moreover,
$$
\|\Lambda_b\prodstellanormaduale\approx\sup_{f\ne0\ne g\in H^2(\B)}\frac{|T_b(f,g)|}{\|f\|_{H^2(\B)}\cdot \|g\|_{H^2(\B)}}
$$
\end{teo}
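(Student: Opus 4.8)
The plan is to read Theorem~\ref{bduale} as a soft duality statement whose whole content is carried by the identity $T_b(f,g)=\langle f\star g,b\rangle=\Lambda_b(f\star g)$ together with two bookkeeping facts. First, every elementary tensor $f\star g$ lies in $H^2(\B)\prodstella H^2(\B)$ with $\|f\star g\prodstellanorma\le\|f\|_{H^2(\B)}\|g\|_{H^2(\B)}$, via the one-term decomposition. Second, the finite sums $\sum_j f_j\star g_j$ with all $f_j,g_j$ polynomials are exactly the polynomials, and the polynomials are dense in $H^2(\B)\prodstella H^2(\B)$: truncate the defining series of an element, then approximate each regular factor in $H^2(\B)$ norm. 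One also records that for $\Phi=\sum_j f_j\star g_j$ and each $n$, $|\check\Phi(n)|\le\sum_j\big|\sum_{k=0}^n\check{f_j}(k)\check{g_j}(n-k)\big|\le\sum_j\|f_j\|_{H^2(\B)}\|g_j\|_{H^2(\B)}$ by Cauchy--Schwarz, so after taking the infimum $|\check\Phi(n)|\le\|\Phi\prodstellanorma$; this shows $\|\cdot\prodstellanorma$ is a genuine norm and that convergence in it implies coefficientwise convergence.

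Granting these, the two easy halves are immediate. If $\Lambda_b$ extends to $[H^2(\B)\prodstella H^2(\B)]^{*}$, then $|T_b(f,g)|=|\Lambda_b(f\star g)|\le\|\Lambda_b\prodstellanormaduale\|f\star g\prodstellanorma\le\|\Lambda_b\prodstellanormaduale\|f\|_{H^2(\B)}\|g\|_{H^2(\B)}$, so $T_b$ is bounded with $\sup_{f,g\ne0}|T_b(f,g)|/(\|f\|_{H^2(\B)}\|g\|_{H^2(\B)})\le\|\Lambda_b\prodstellanormaduale$. For the surjectivity statement, given $\Lambda\in[H^2(\B)\prodstella H^2(\B)]^{*}$ I would set $\check b(n):=\overline{\Lambda(q^n)}$; then $|\check b(n)|\le\|\Lambda\prodstellanormaduale\|q^n\prodstellanorma\le\|\Lambda\prodstellanormaduale$, so $b(q)=\sum_n q^n\check b(n)$ is regular on $\B$, and $\Lambda_b(q^n)=\langle q^n,b\rangle=\overline{\check b(n)}=\Lambda(q^n)$ (with scalar factors kept on the right). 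Thus $\Lambda_b$ and $\Lambda$ agree on the dense subspace of polynomials, so $\Lambda$ is the continuous extension of $\Lambda_b$; uniqueness is clear because $\Lambda_{b_1}=\Lambda_{b_2}$ forces $\check{b_1}(n)=\check{b_2}(n)$ for all $n$.

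The substantive implication is $T_b$ bounded $\Rightarrow$ $\Lambda_b\in[H^2(\B)\prodstella H^2(\B)]^{*}$, with a matching norm bound. The obstacle is that $\langle f\star g,b\rangle$ need not converge absolutely (the coefficients of $f\star g$ are only bounded and those of $b$ only sub-exponential), so one cannot directly justify $\Lambda_b\big(\sum_j f_j\star g_j\big)=\sum_j T_b(f_j,g_j)$. My remedy is to dilate the symbol: for $0<r<1$ put $b_r(q)=b(rq)=\sum_n q^n r^n\check b(n)$, so that $\sum_n r^n|\check b(n)|<\infty$ and $\langle f\star g,b_r\rangle$ converges absolutely for all $f,g\in H^2(\B)$. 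The identity $\langle f\star g,b_r\rangle=\langle f_r\star g_r,b\rangle=T_b(f_r,g_r)$, where $f_r(q)=f(rq)$ and $\|f_r\|_{H^2(\B)}\le\|f\|_{H^2(\B)}$, shows $T_{b_r}(f,g):=\langle f\star g,b_r\rangle$ is a bounded Hankel form with $\|T_{b_r}\|\le\|T_b\|$, where $\|\cdot\|$ abbreviates $\sup_{f,g\ne0}|\cdot|/(\|f\|_{H^2(\B)}\|g\|_{H^2(\B)})$. Now fix a polynomial $p$ and a decomposition $p=\sum_j f_j\star g_j$ with $\sum_j\|f_j\|_{H^2(\B)}\|g_j\|_{H^2(\B)}\le\|p\prodstellanorma+\varepsilon$. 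Since each coefficient of $f_j\star g_j$ is bounded by $\|f_j\|_{H^2(\B)}\|g_j\|_{H^2(\B)}$, the double series defining $\sum_j\langle f_j\star g_j,b_r\rangle$ converges absolutely, so Fubini together with the coefficientwise identity $p=\sum_j f_j\star g_j$ gives $\Lambda_{b_r}(p)=\langle p,b_r\rangle=\sum_j\langle f_j\star g_j,b_r\rangle=\sum_j T_{b_r}(f_j,g_j)$; hence $|\Lambda_{b_r}(p)|\le\|T_{b_r}\|(\|p\prodstellanorma+\varepsilon)\le\|T_b\|(\|p\prodstellanorma+\varepsilon)$, and letting $\varepsilon\to0$, $|\Lambda_{b_r}(p)|\le\|T_b\|\,\|p\prodstellanorma$. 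Finally, the $n$-th coefficient $r^n\check b(n)$ of $b_r$ tends to $\check b(n)$, so $\Lambda_{b_r}(p)\to\Lambda_b(p)$ as $r\to1$ for each polynomial $p$, whence $|\Lambda_b(p)|\le\|T_b\|\,\|p\prodstellanorma$; density of the polynomials extends $\Lambda_b$ to $[H^2(\B)\prodstella H^2(\B)]^{*}$ with $\|\Lambda_b\prodstellanormaduale\le\|T_b\|$, and the identity $\Lambda_b(f\star g)=T_b(f,g)$ for all $f,g\in H^2(\B)$ follows by approximating $f,g$ with polynomials and using continuity of $\Lambda_b$ on $H^2(\B)\prodstella H^2(\B)$ and of $T_b$ on $H^2(\B)\times H^2(\B)$.

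Combining the two directions yields the asserted equivalence of norms (in fact an equality in this normalization, of which only the weaker $\approx$ is needed). I expect essentially all of the difficulty to sit in the absolute-convergence and interchange point isolated above; the dilation trick is its standard cure, and the remainder is routine, the only quaternionic caveat being the systematic placement of scalar factors on the right, as in $\langle h\alpha,b\rangle=\langle h,b\rangle\alpha$ and in the recovery $\check b(n)=\overline{\Lambda(q^n)}$.
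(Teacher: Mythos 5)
Your proof is correct and follows the same architecture as the paper's: the easy direction via the one-term decomposition of $f\star g$, the identification of an arbitrary $\Lambda$ in the dual with some $\Lambda_b$, and the hard direction by testing $b$ against an arbitrary decomposition $\Phi=\sum_j f_j\star g_j$ and passing to the infimum. The one genuine difference is that you isolate and repair a step the paper passes over in silence: the paper writes $|\langle\Phi,b\rangle_{H^2(\B)}|\le\sum_j|\langle f_j\star g_j,b\rangle_{H^2(\B)}|$ directly, which presupposes both that the pairing $\langle\Phi,b\rangle_{H^2(\B)}$ is well defined for a general element of the weak product and that it can be evaluated term by term along an infinite, only norm-convergent decomposition. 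Your dilation $b\mapsto b_r$, the bound $\|T_{b_r}\|\le\|T_b\|$ via $\langle f\star g,b_r\rangle=T_b(f_r,g_r)$, the absolutely convergent Fubini interchange for polynomials, and the limit $r\to1$ supply exactly the missing justification; the coefficient bound $|\check\Phi(n)|\le\|\Phi\prodstellanorma$ and the density of polynomials then carry the extension to all of $H^2(\B)\prodstella H^2(\B)$. For the surjectivity part you recover $b$ coefficientwise from $\Lambda(q^n)$, where the paper instead invokes the continuous dense embedding $H^2(\B)\hookrightarrow H^2(\B)\prodstella H^2(\B)$ and the self-duality of $H^2(\B)$; the two arguments are equivalent. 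In short: same route, but your version is the more complete one at the only delicate point of the proof.
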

\begin{proof}
The argument is standard. We translate it into quaternionic language for the ease of the reader. First consider a regular $b$ such that $\Lambda_b\in \left(H^2(\B)\prodstella H^2(\B)\right)^{*}$. Then, for any $f,g\in H^2(\B)$,
\begin{equation*}
\left|\langle f\star g, b\rangle_{H^2(\B)}\right|\le\| f\star g\prodstellanorma\| \Lambda_b\prodstellanormaduale\le\| f\| _{H^2(\B)}\| g\| _{H^2(\B)}\| \Lambda_b\prodstellanormaduale
\end{equation*}
where the last equality follows from the definition of the norm on $H^2(\B)\prodstella H^2(\B)$. 
Let now $\Lambda$ be a linear functional in $\left(H^2(\B)\prodstella H^2(\B)\right)^{*}$. Since $H^2(\B)$ imbeds in $H^2(\B)\prodstella H^2(\B)$ continuously, as a dense subspace, 
$\Lambda$ can be identified with a unique element of $(H^2(\B))^*=H^2(\B)$. Hence, $\Lambda=\Lambda_b$ for some $b$ in  $H^2(\B)$, and
$$
\sup_{f\ne0\ne g\in H^2(\B)}\frac{|T_b(f,g)|}{\|f\|_{H^2(\B)}\cdot \|g\|_{H^2(\B)}}\lesssim\|\Lambda_b\prodstellanormaduale.
$$ 
In the other direction, let $\Phi\in H^2(\B)\prodstella H^2(\B)$. Then, for any decomposition of $\Phi$ of the form $\Phi=\sum_{j}f_j\star g_j$, if $T_b$ is bounded, we have
\begin{equation*}
|\langle \Phi,b \rangle_{H^2(\B)}|\le \sum_{j}\left|\langle f_j\star g_j,b \rangle_{H^2(\B)}\right|\le \sum_{j}\| f_j\| _{H^2(\B)}\| g_j\| _{H^2(\B)} c(b)
\end{equation*}
for some constant $c(b)$ depending only on the function $b$.
Therefore, taking the infimum on all possible decompositions of the function $\Phi$, we get
\[|\langle \Phi,b \rangle_{H^2(\B)}|\le \inf \Big\{ \sum_{j}\| f_j\| _{H^2(\B)}\| g_j\| _{H^2(\B)} c(b) \ \ |\ \ f_j,g_j\in H^2(\B) \Big\}=c(b)\| \Phi\prodstellanorma \]
thus showing that $\Lambda_b\in \left(H^2(\B)\prodstella H^2(\B)\right)^{*}$ and that 
$$
\sup_{f\ne0\ne g\in H^2(\B)}\frac{|T_b(f,g)|}{\|f\|_{H^2(\B)}\cdot \|g\|_{H^2(\B)}}\gtrsim\|\Lambda_b\prodstellanormaduale.
$$ 
\end{proof}

Another way to characterize the functions $b$ for which the Hankel operator $T_b$ is a bounded operator, is in terms of Carleson measures. 

First, we need a preliminary result (which holds in analogy with the complex case), concerning the $H^2$-norm. The calculations proving it are identical to those of the complex case.
\begin{pro}\label{norma2}
Let $dVol_{\B}$ denote the volume form on $\B$ defined by  $dVol_{\B}(x+yI)=\frac{1}{4}dA_{\s}(I)dxdy$. Then, for any $f\in H^2(\B)$
\[\| f\| ^2_{H^2(\B)}=|f(0)|^2+\frac{1}{Vol(\B)}\int_{\B}|\de f(q)|^2 \log |q|^{-2}dVol_{\B}(q)\approx |f(0)|^2+\frac{1}{Vol(\B)}\int_{\B}|\de f(q)|^2(1-|q|^2)dVol_{\B}(q).\]
\end{pro}
For a proof of the equivalence between the complex analogues of the two integral terms $\frac{1}{Vol(\B)}\int_{\B}|\de f(q)|^2 \log |q|^{-2}dVol_{\B}(q)$ and $\frac{1}{Vol(\B)}\int_{\B}|\de f(q)|^2(1-|q|^2)dVol_{\B}(q)$, see, e.g., the proof of Theorem $8.1.10$ in \cite{Zhu}.
By polarization, we can use as an inner product of $H^2(\B)$ the following
\begin{equation}\label{innerpro2}
\langle f, g\rangle_{H^2(\B)} = \overline{g(0)}f(0)+\frac{1}{Vol(\B)}\int_{\B}\overline{\de g(q)}\de f(q)(1-|q|^2)dVol_{\B}(q).\end{equation}

The following result can be interpreted as an analogue of the Nehari Theorem in the quaternionic setting.
\begin{teo}\label{bcarleson}
Let $b:\B\to \HH$ be a regular function. Then the Hankel operator $T_b$ is bounded, i.e. there exists a constant $c(b)$ depending only on $b$ such that 
\begin{equation}\label{hankel}
\left|\langle f\star g, b\rangle_{H^2(\B)}\right|^2 \le c(b)\| f\| ^2_{H^2(\B)}\| g\| ^2_{H^2(\B)}
\end{equation}
for any $f,g\in H^2(\B)$, if and only if the measure on the unit ball $\B$ defined by
\begin{equation}\label{measure}
|\de b(q)|^2(1-|q|^2)dVol_{\B}(q)
\end{equation}
is a Carleson measure for $H^2(\B)$. 
\end{teo}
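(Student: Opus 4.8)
The plan is to derive both implications from the reproducing formula \eqref{innerpro2} for the inner product of $H^2(\B)$, the Leibniz rule $\de(f\star g)=(\de f)\star g+f\star(\de g)$ for the Cullen derivative of a $\star$-product (read off immediately from the power series), the pointwise identity of Proposition \ref{trasf}, and the Representation Formula, transferring the genuinely one--variable content to the classical theorems of Nehari \cite{nehari} and C.\ Fefferman \cite{feffermanstein} on each slice $\B_I\cong\D$ (see also \cite{Duren,Zhu}).

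For the \emph{sufficiency} direction (if $\mu_b$ is Carleson then $T_b$ is bounded) I would first note that $\mu_b$ Carleson already forces $b\in H^2(\B)$ by Corollary \ref{coronorma2}. Applying \eqref{innerpro2} to $\langle f\star g,b\rangle_{H^2(\B)}$ together with the Leibniz rule gives
\[
\langle f\star g,b\rangle_{H^2(\B)}=\overline{b(0)}\,f(0)g(0)+\frac1{Vol(\B)}\int_\B\overline{\de b(q)}\,\bigl[((\de f)\star g)(q)+(f\star(\de g))(q)\bigr](1-|q|^2)\,dVol_\B(q).
\]
The boundary term is $\le\|b\|_{H^2(\B)}\|f\|_{H^2(\B)}\|g\|_{H^2(\B)}$, since point evaluation at $0$ is bounded. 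For the first integral, Proposition \ref{trasf} gives $|((\de f)\star g)(q)|=|\de f(q)|\,|g(\tilde q)|$ with $\tilde q=(\de f(q))^{-1}q(\de f(q))$ lying on the $2$-sphere $x+y\s$ through $q=x+yI$; a Cauchy--Schwarz split isolating $|\de f|^2(1-|q|^2)$ on one side leaves the factor $\int_\B|g(\tilde q)|^2\,d\mu_b(q)$, and since the Representation Formula yields $|g(\tilde q)|^2\le|g(q)|^2+|g(\bar q)|^2$ with $|g(\bar q)|=|g^c(q)|$ and $\|g^c\|_{H^2(\B)}=\|g\|_{H^2(\B)}$, the Carleson property of $\mu_b$ bounds it by $\lesssim\|g\|_{H^2(\B)}^2$, while the other factor is $\approx\|f\|_{H^2(\B)}$ by Corollary \ref{coronorma2}. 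The $f\star(\de g)$ term is treated symmetrically, now grouping $|\de b(q)|\,|f(q)|$ on one side of Cauchy--Schwarz and using the Representation Formula for $\de g$ and the identity $(\de g)^c=\de(g^c)$.

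For the \emph{necessity} direction (if $T_b$ is bounded then $\mu_b$ is Carleson) I would work slicewise. Fix $I\in\s$ and $J\in\s$ with $J\perp I$, and split $b|_{\B_I}=B_1+B_2J$ with $B_1,B_2\colon\B_I\to L_I$ holomorphic (Lemma \ref{split}), so that $|\de b(x+yI)|^2=|B_1'(x+yI)|^2+|B_2'(x+yI)|^2$. For $L_I$-valued holomorphic $F,G$ on $\B_I$ one has $\ext(F)\star\ext(G)|_{\B_I}=FG$ by commutativity of $L_I$, $\|\ext(F)\|_{H^2(\B)}=\|F\|_{H^2(\B_I)}$, and a short computation with the Splitting Lemma shows that the $L_I$-component of $\langle\ext(F)\star\ext(G),b\rangle_{H^2(\B)}$ equals $\langle FG,B_1\rangle_{H^2(\B_I)}$; testing instead on $\ext(F)$ and $\ext(GJ)$ produces $\langle FG,B_2\rangle_{H^2(\B_I)}$ (up to conjugation). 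Hence boundedness of $T_b$ with constant $c(b)$ forces both classical Hankel bilinear forms, with symbols $B_1$ and $B_2$ on $\B_I\cong\D$, to be bounded with constant $\le c(b)$ \emph{uniformly in $I$}; by Nehari's theorem $B_1,B_2\in BMOA(\B_I)$ with norm $\lesssim c(b)$, and by Fefferman's theorem the measures $|B_j'(z)|^2(1-|z|^2)\,dA(z)$, $j=1,2$, are Carleson for $H^2(\D)$ with constants $\lesssim c(b)^2$, uniformly in $I$. Therefore $\int_{S_I(w)}|\de b(x+yI)|^2(1-|z|^2)\,dA(z)\lesssim c(b)^2(1-|w|)$ for every $w\in\B$ and $I\in\s$, and integrating this estimate in $I$ over $\s$ yields $\mu_b(S(w))\lesssim c(b)^2(1-|w|)$, whence $\mu_b$ is a Carleson measure by Theorem \ref{badolo}.

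I expect the main obstacle, in both directions, to be the noncommutativity of $\HH$. In the sufficiency direction it appears through Proposition \ref{trasf}: $(\de f)\star g$ evaluated at $q$ sees $g$ at the sphere-rotated point $\tilde q$, so one cannot feed $g$ directly into the Carleson inequality for $\mu_b$ and must symmetrize via the Representation Formula, at the harmless cost of the regular conjugate $g^c$ (which has the same $H^2$ norm as $g$). In the necessity direction the difficulty is that the bounded Hankel form only reads off, slicewise, the two holomorphic components $B_1,B_2$ of $b$ separately; the delicate point is to extract a Carleson estimate for the full density $|\de b|^2=|B_1'|^2+|B_2'|^2$ that is uniform in $I\in\s$ and then to glue these slice estimates into the intrinsic symmetric-box condition of Theorem \ref{badolo}.
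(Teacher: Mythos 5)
Your proposal is correct and follows essentially the same route as the paper's proof: sufficiency via the weighted-derivative inner product of Corollary \ref{coronorma2}, the Leibniz rule for $\de(f\star g)$, Proposition \ref{trasf} to locate the rotated evaluation point, and the Representation Formula together with $\|g^c\|_{H^2(\B)}=\|g\|_{H^2(\B)}$; necessity via the Splitting Lemma on each slice and the classical Nehari--Fefferman theory with constants uniform in $I$. The only (harmless) deviation is that you glue the slice estimates through the symmetric-box criterion of Theorem \ref{badolo}, whereas the paper concludes more directly by observing that the measure is slice Carleson and invoking Proposition \ref{sliceimplica}.
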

\begin{proof}
Suppose first that $T_b$ is bounded. Let $I\in \s$ and consider any $f,g\in H^2(\B)$ such that $f,g:\B_I\to L_I$, namely such that $f,g\in H^2(\D)\subset H^2(\B_I)$; according to the Splitting Lemma, let $b$ be decomposed as
\[b(z)=b_1(z)+b_2(z)J\]
with $b_1,b_2:\B_I\to L_I$ holomorphic and $J\in \s$ orthogonal to $I$.
Then, recalling that the norm and the inner product of $H^2(\B)$ can be computed on any slice,
\begin{equation*}
|\langle f\star g, b\rangle_{H^2(\B)}|^2=|\langle f\star g, b\rangle_{H^2(\B_I)}|^2=|\langle fg, b_1+b_2J\rangle_{H^2(\B_I)}|^2=|\langle fg, b_1\rangle_{H^2(\D)}|^2+|\langle fg,b_2\rangle_{H^2(\D)}|^2
\end{equation*}
which, recalling equation \eqref{hankel}, leads to
\[|\langle fg,b_k\rangle_{H^2(\D)}|^2\le c(b)\| f\| ^2_{H^2(\B)}\| g\| ^2_{H^2(\B)}\]
for both $k=1,2$. Thanks to the analogous result (see e.g. \cite{Zhu}) in the complex case, we get that $|b_k'(z)|^2(1-|z|^2)dxdy$ is a (complex) Carleson measure for $H^2(\D)\subset H^2(\B_I)$ for $k=1,2$ and hence, for any $f\in H^2(\B_I)$, 
\begin{align*}
&\int_{\B_I}|f(z)|^2|\de b(z)|^2(1-|z|^2)dxdy=\int_{\B_I}|f(z)|^2(|b_1'(z)+b_2'(z)J|^2)(1-|z|^2)dxdy\\
&=\int_{\B_I}|f(z)|^2|b_1'(z)|^2(1-|z|^2)dxdy+\int_{\B_I}|f(z)|^2|b_2'(z)|^2(1-|z|^2)dxdy \\
&\lesssim ||f||^2_{H^2(\B_I)}
\end{align*}
which, recalling (\ref{slicecarleson}), implies that the quaternionic measure 
$|\de b(z)|^2(1-|z|^2)dxdy$
is Carleson for  $H^2(\B_I)$. 
The fact that $I$ was any imaginary unit, yields that the measure $|\de b(q)|^2(1-|q|^2)dVol_{\B}(q)$ is slice Carleson for $H^2(\B)$ and therefore, thanks to Proposition \ref{sliceimplica}, Carleson.

Let us now prove the opposite implication. 
Using Equation \eqref{innerpro2} and the Leibniz rule for the slice derivative, if $z=x+yI\in \B_{I}$, we can write \begin{equation*}
\begin{aligned}
&|\langle f\star g, b\rangle_{H^2(\B)}| \lesssim \left|\overline{b(0)}(f\star g)(0)\right|+\left|\int_{\B}\overline{\de b (q)} \de (f\star g)(q)(1-|q|^2)dVol_{\B}(q)\right|\\
&\le \left|\frac 1 4 \int_{\s}dA_{\s}(I)\int_{\B_{I}}\overline{\de b (z)} \de (f\star g)(z)(1-|z|^2)dxdy\right|\\ 
&\lesssim \int_{\s}dA_{\s}(I) \int_{\B_{I}} |\de b(z)\| \de f(z)\| g(\hat z)|(1-|z|^2)dxdy \\ &\hskip 2 cm + \int_{\s}dA_{\s}(I)\int_{\B_{I}} |\de b(z)\| f(z)\| \de g(\tilde z)|(1-|z|^2)dxdy = \mathcal I_1+ \mathcal I_2
\end{aligned}
\end{equation*}
where $\hat z$ and $\tilde z$ are points lying on the same two sphere as $z$, determined in view of the expression  of the $\star $-product in terms of the pointwise product (see Proposition \ref{trasf}. Here we are omitting the discrete subset of $\B_{I}$ where $f$ or $\de f$ vanish).
To estimate the integral $\mathcal I_1$ we first use the Cauchy-Schwarz inequality, and then Proposition \ref{norma2} to get
\begin{equation}\label{Iquadro}
\begin{aligned}
\mathcal I_1^2 &\lesssim \int_{\s}dA_{\s}(I)\int_{\B_{I}} |\de b(z)|^2|g(\hat z)|^2(1-|z|^2)dxdy\int_{\s}dA_{\s}(I)\int_{\B_{I}}|\de f(z)|^2(1-|z|^2)dxdy\\
&\lesssim \| f\| ^2_{H^2(\B)}\int_{\s}dA_{\s}(I) \int_{\B_{I}} |\de b(z)|^2|g(\hat z)|^2(1-|z|^2)dxdy.
\end{aligned}
\end{equation}
%
The Representation Formula allows us to express $g(\hat z)$ in terms of $g(z)$ and $g(\bar z)$, so that we get
\begin{equation}\label{Iquadro2}
\begin{aligned}
\int_{\s}dA_{\s}(I)\int_{\B_{I}} &|\de b(z)|^2|g(\hat z)|^2(1-|z|^2)dxdy\\
&\lesssim \int_{\s}dA_{\s}(I)\int_{\B_{I}} |\de b(z)|^2(|g(z)|^2+|g(\bar z)|^2)(1-|z|^2)dxdy.
\end{aligned}
\end{equation}
Now $|g(\bar z)|=|\overline{g(\bar z)}|=|g^c(z)|$ and $\| g^c\| _{H^2(\B)}=\| g\| _{H^2(\B)}$ (see, e.g., \cite{hardy}); hence, using equations \eqref{Iquadro}, \eqref{Iquadro2} and recalling that $|\de b(q)|^2(1-|q|^2)dVol_{\B}(q)$ is a Carleson measure for $H^2(\B)$, we conclude
\[\mathcal I_1^2 \lesssim \| g\| ^2_{H^2(\B)}\| f\| ^2_{H^2(\B)}.\] 
With analogous arguments, we get the same estimate on the second integral $\mathcal I_2$, thus completing the proof. 
\end{proof}

\section{Quaternionic BMOA}\label{gorizia}
\begin{defn}
Let $f\in H^1(\B)$ and, for any interval $a=(\alpha, \beta)$ of $\rr$ such that $|a|:=|\beta-\alpha|\le 2\pi$, denote by $f_{I,a}$ the average value of (the radial limit of) $f$ on the arc $(e^{\alpha I}, e^{\beta I})\subseteq \p \B_I$,
\[f_{I,a}=\frac{1}{|a|}\int_{a}f(e^{\theta I})d\theta.\]
We say that $f\in BMOA(\B_I)$ if 
\[\| f\| _{BMOA(\B_I)}:=\sup_{{a\subset \rr, \ |a| \le 2\pi}}\left\{\frac{1}{|a|}\int_{a}|f(e^{\theta I})-f_{I,a}|d\theta\right\}<+\infty.\] 
We say that $f \in BMOA (\B)$ if 
\[\| f\| _{BMOA(\B)}:=\sup_{I\in \s}\| f\| _{BMOA(\B_I)}<+\infty.\] 
\end{defn}
\noindent Since $\| c \| _{BMOA(\B)}=0$ for any constant function $c$, the quantity $\| \cdot \| _{BMOA(\B)}$ defines only a semi-norm on $BMOA(\B)$. To make it a norm it suffices to consider 
$\| f \| _{BMOA(\B)}+ f(0)$ for any $f\in{BMOA(\B)}$. For the purposes of the present paper we can work with the semi-norm.\\
The fact that $f\in H^1(\B)$ yields that almost everywhere at the boundary $f$ satisfies the Representation Formula (see \cite{hardy}), which leads to the following statement.
\begin{pro}\label{BMOslice}
Let $f\in H^1(\B)$. Then $f\in BMOA(\B)$ if and only if $f\in BMOA(\B_I)$ for some $I\in \s$. More precisely
\[\| f\| _{BMOA(\B_I)}\le\|f\|_{BMOA(\B)}\le 2\|f\|_{BMOA(\B_I)}.\]

\end{pro} 
\begin{proof}
The necessity condition is trivial. 
Suppose then that $f\in BMOA(\B_I)$ for $I\in \s$. 
Thanks to the Representation Formula, the average value of $f$ on a different slice $\B_J$, $J\in\s$, $J\neq\pm I$, can be computed as
%
\begin{equation*}
\begin{aligned}
f_{J,a}&=\frac{1}{|a|}\int_{a}f(e^{\theta J})d\theta=\frac{1}{|a|}\int_{a}\left(\frac{1-JI}{2}f(e^{\theta I})+\frac{1+JI}{2}f(e^{-\theta I})\right)d\theta=\frac{1-JI}{2}f_{I,a}+\frac{1+JI}{2}f_{I,-a}
\end{aligned}
\end{equation*}
where, if $a=(\alpha,\beta)$, then $-a=(-\beta,-\alpha)$.
The statement follows by simple calculations. 
\end{proof}

\begin{teo}\label{BMOACarleson}
Let $f\in H^1(\B)$. Then $f\in BMOA(\B)$ if and only if the measure $|\de f(q)|^2(1-|q|^2)dVol_{\B}(q)$ is a Carleson measure for $H^2(\B)$.
\end{teo}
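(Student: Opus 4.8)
The plan is to reduce the statement, slice by slice, to the classical complex-variable characterization of $BMOA$ of the disc in terms of Carleson measures, and then to promote the slicewise statement to the quaternionic ball using the results already established (Proposition \ref{BMOslice}, Proposition \ref{charslice}, Proposition \ref{sliceimplica}, and the Splitting Lemma). The key point is that on a fixed slice $\B_I$ all three relevant quantities --- the $BMOA(\B_I)$ seminorm, the $H^2$-norm, and the measure $|\de f|^2(1-|z|^2)\,dx\,dy$ --- decompose additively (up to constants) along the Splitting Lemma decomposition $f|_{\B_I}=F+GJ$ with $F,G$ holomorphic and $L_I$-valued.

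First I would fix $I\in\s$ and $J\in\s$ with $J\perp I$, and write $f(x+yI)=F(x+yI)+G(x+yI)J$ via Lemma \ref{split}. Since $J$ is a fixed unit orthogonal to $I$, for any $\alpha,\beta\in L_I$ one has $|\alpha+\beta J|^2=|\alpha|^2+|\beta|^2$; applying this to boundary values and to averages gives
\[
\frac{1}{|a|}\int_a |f(e^{\theta I})-f_{I,a}|^2\,d\theta=\frac{1}{|a|}\int_a |F(e^{\theta I})-F_{I,a}|^2\,d\theta+\frac{1}{|a|}\int_a |G(e^{\theta I})-G_{I,a}|^2\,d\theta,
\]
so that $f\in BMOA(\B_I)$ if and only if both $F$ and $G$, viewed as holomorphic functions $\D\to\C$, lie in the classical $BMOA(\D)$. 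Likewise the Cullen derivative is $\de f=\de F+(\de G)J=F'+G'J$ on $L_I$ (identifying $L_I\cong\C$), hence $|\de f(z)|^2=|F'(z)|^2+|G'(z)|^2$ and the slice measure $|\de f(z)|^2(1-|z|^2)\,dx\,dy$ is the sum of the two classical measures $|F'(z)|^2(1-|z|^2)\,dx\,dy$ and $|G'(z)|^2(1-|z|^2)\,dx\,dy$.

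Next I would invoke the classical theorem (as in \cite{Zhu}): for a holomorphic $h$ on $\D$, $h\in BMOA(\D)$ if and only if $|h'(z)|^2(1-|z|^2)\,dx\,dy$ is a Carleson measure for $H^2(\D)$, with comparable constants. Applying this to $F$ and $G$, and using the additivity just noted, we get that $f\in BMOA(\B_I)$ if and only if $|\de f(z)|^2(1-|z|^2)\,dx\,dy$ is a Carleson measure for $H^2(\D)\subset H^2(\B_I)$; by Proposition \ref{charslice} this is equivalent to the box condition $\mu_I(S_I(z))\lesssim|A_I(z)|$ on that slice. Now if the global measure $\mu_b:=|\de f(q)|^2(1-|q|^2)dVol_\B(q)$ is Carleson for $H^2(\B)$ then, restricting to test functions coming from a single slice and using the Splitting Lemma as in the proof of Theorem \ref{bcarleson}, it is slice Carleson, hence the slice box condition holds for every $I$, hence $f\in BMOA(\B_I)$ for every $I$, i.e. $f\in BMOA(\B)$. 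Conversely, if $f\in BMOA(\B)$ then by Proposition \ref{BMOslice} (or directly the definition) $f\in BMOA(\B_I)$ for every $I$, so the slice box condition holds for every $I$; by Proposition \ref{charslice} the measure is slice Carleson, and by Proposition \ref{sliceimplica} it is Carleson for $H^2(\B)$.

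The main obstacle is the bookkeeping connecting the slice picture to the ball picture: one must check that $BMOA(\B)$, defined via boundary averages on arcs of $\p\B_I$, really does force the slice box condition uniformly in $I$ (this is where the almost-everywhere validity of the Representation Formula at the boundary, cited from \cite{hardy}, is used, exactly as in Proposition \ref{BMOslice}), and, in the converse direction, that a measure which is not a priori supported off $\B\cap\rr$ still decomposes compatibly --- but this is handled by the decomposition $\mu=\mu_\rr+\tilde\mu$ and the discussion preceding Proposition \ref{charslice}, since $|\de f|^2(1-|q|^2)dVol_\B$ assigns no mass to $\B\cap\rr$ as $dVol_\B$ does not. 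Everything else is a direct transcription of the one-variable argument through the Splitting Lemma, so no genuinely new idea beyond those already deployed for Theorems \ref{badolo} and \ref{bcarleson} is needed.
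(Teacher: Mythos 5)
Your implication ``$f\in BMOA(\B)\Rightarrow$ Carleson'' is correct and is essentially the paper's own argument (split $f=F+GJ$, apply the classical theorem to $F$ and $G$, conclude slice Carleson, invoke Proposition \ref{sliceimplica}). The gap is in the converse. You assert that if $|\de f(q)|^2(1-|q|^2)dVol_{\B}(q)$ is Carleson for $H^2(\B)$ then it is slice Carleson, ``by restricting to test functions coming from a single slice and using the Splitting Lemma as in the proof of Theorem \ref{bcarleson}.'' That mechanism does not deliver the conclusion. For a general measure, Carleson does \emph{not} imply slice Carleson: the hypothesis $\int_{\B}|h|^2d\mu\lesssim\|h\|^2_{H^2(\B)}$ integrates $|h|^2$ over the whole ball, and a single slice $\B_I$ typically has $\nu$-measure zero in the disintegration \eqref{radon}, so taking $h$ with values in $L_I$ on $\B_I$ gives no control whatsoever of the slice integral $\int_{\B_I}|h|^2d\mu_I$. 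The contrast between Theorem \ref{badolo} (the symmetric box condition controls only the \emph{average} over $I$ of $\mu_I(S_I(q))$) and Proposition \ref{charslice} (slice Carleson demands a bound \emph{uniform} in $I$) makes the asymmetry explicit. The situation in Theorem \ref{bcarleson} is genuinely different: there one restricts the Hankel form $\langle f\star g,b\rangle_{H^2(\B)}$, and that restriction is legitimate because the $H^2(\B)$ pairing is computed on power series coefficients and can therefore be evaluated on any one slice; the Carleson embedding has no such slice-independence.

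The implication you need is true for this particular measure, but only because of its special structure: $\de f$ is itself regular, so the Representation Formula gives $|\de f(x+yI)|^2\le 2\left(|\de f(x+yJ)|^2+|\de f(x-yJ)|^2\right)$, and averaging over $I\in\s$ shows that the slice densities (hence the quantities $\mu_I(S_I(q))$, after accounting for the reflection $y\mapsto -y$, under which $S(q)$ is invariant) are mutually comparable; the symmetric-box bound then upgrades to the slice-box bound. You must either supply that computation or take the paper's route, which avoids the issue entirely: from the Carleson hypothesis deduce via Theorem \ref{bcarleson} that the Hankel form $T_f$ is bounded, restrict $T_f$ to $L_I$-valued test functions (legitimate, for the reason above), apply the classical Nehari--Fefferman theorem to conclude that the splitting components $F,G$ lie in $BMOA(\D)$, and then use the additivity \eqref{BMOAsplit} and Proposition \ref{BMOslice} to get $f\in BMOA(\B)$.
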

\begin{proof}
Let $I,J\in\s$ with $J \perp I$, and consider the splitting of $f$ on $\B_I$ with respect to $J$, $f=F+GJ$, where $F,G:\B_I\to L_I$ are holomorphic functions.
Notice that for any $a \subset \rr, |a|\le 2\pi$,
\[f_{I,a}=\frac{1}{|a|}\int_{a}f(e^{\theta I})d\theta=\frac{1}{|a|}\int_{a}F(e^{\theta I})+G(e^{\theta I})Jd\theta=F_{a}+G_aJ\]
where $F_a,G_a$ are the average values of $F$ and $G$ respectively.
Moreover, thanks to the orthogonality of $I$ and $J$ we get
\begin{equation}\label{BMOAsplit}
\begin{aligned}
\frac{1}{|a|}\int_{a}|f(e^{\theta I})-f_{I,a}|d\theta=\frac{1}{|a|}\int_{a}\left(|F(e^{\theta I})-F_{a}|^2+|G(e^{\theta I})-G_a|^2\right)^{1/2}d\theta.
\end{aligned}
\end{equation}
Then, if $f\in BMOA(\B)$, we also have 
that both $F$ and $G$ belong to the (complex) space $BMOA(\D)$.
Thanks to classical results (see, e.g., Theorem 8.3.5 in \cite{Zhu}) we get then that both $|F'(z)|^2(1-|z|^2)dxdy$ and $|G'(z)|^2(1-|z|^2)dxdy$
are Carleson measures for the complex Hardy space $H^2(\D)\subset H^2(\B_I)$. Hence
\[|\de f(z)|^2(1-|z|^2)dxdy=\left(|F'(z)|^2+|G'(z)|^2\right)(1-|z|^2)dxdy\]
is a Carleson measure for $H^2(\B_I)$. Since $I$ was any imaginary unit, we have that $|\de f(q)|^2(1-|q|^2)dVol_{\B}(q)$ is slice Carleson for $H^2(\B)$ and therefore it is a Carleson measure for $H^2(\B)$.

If, on the other hand, $|\de f(q)|^2(1-|q|^2)dVol_{\B}(q)$ is a Carleson measure for $H^2(\B)$, thanks to 
Proposition \ref{bcarleson} we get that the Hankel operator $T_f$ associated with $f$
\[T_f:H^2(\B)\times H^2(\B) \to\HH, \quad (g_1,g_2)\mapsto \langle g_1\star g_2, f \rangle_{H^2(\B)} \]
is bounded. In particular, $T_f$ is bounded also when restricted to $H^2(\D)\times H^2(\D) \subset H^2(\B_I)\times H^2(\B_I)$. Namely, for any $g_1,g_2\in H^2(\B)$ that map $\B_I$ to $L_I$ for some $I\in \s$, if the splitting of $f$ on $\B_I$ with respect to $J \perp I$ is $f=F+GJ$, then
\[+\infty>|\langle g_1\star g_2, f\rangle_{H^2(\B)}|^2=|\langle g_1g_2, F+GJ\rangle_{H^2(\B_I)}|^2=|\langle g_1g_2, F\rangle_{H^2(\D)}|^2+|\langle g_1g_2, G\rangle_{H^2(\D)}|^2.\] 
Hence, thanks to classical results (see e.g. \cite{peller}) we have that both $F$ and $G$ belong to $BMOA(\D)$. Recalling equation \eqref{BMOAsplit} we get that $f=F+GJ \in BMOA(\B_I)$ which, by Proposition \ref{BMOslice}, leads to the conclusion.   
\end{proof}
Combining Theorem \ref{bduale}, Theorem \ref{bcarleson} and Theorem \ref{BMOACarleson} we get the following identification.
\begin{coro}\label{duale}
The two spaces $(H^2(\B)\prodstella H^2(\B))^*$ and $BMOA(\B)$ coincide.
\end{coro}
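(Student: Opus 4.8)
The plan is to observe that Corollary \ref{duale} is a formal consequence of the three preceding theorems, once one pins down the identification under which the statement is to be read. By Theorem \ref{bduale}, the assignment $b\mapsto\Lambda_b$, with $\Lambda_b(h)=\langle h,b\rangle_{H^2(\B)}$, is a bijection from the set of regular functions $b$ on $\B$ for which the Hankel form $T_b$ is bounded onto the dual space $(H^2(\B)\prodstella H^2(\B))^*$; moreover it is an isomorphism of normed spaces, in the sense that $\|\Lambda_b\prodstellanormaduale\approx\sup_{f,g\ne0}|T_b(f,g)|/(\|f\|_{H^2(\B)}\|g\|_{H^2(\B)})$. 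So it suffices to identify, among regular functions $b$, exactly those for which $T_b$ is bounded.

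First I would invoke Theorem \ref{bcarleson}: $T_b$ is bounded if and only if the measure $\mu_b(q)=|\de b(q)|^2(1-|q|^2)dVol_{\B}(q)$ is a Carleson measure for $H^2(\B)$. Then I would invoke Theorem \ref{BMOACarleson}: for $b\in H^2(\B)$ one has $b\in BMOA(\B)$ if and only if that same measure $\mu_b$ is Carleson. (Recall that $BMOA(\B)\subset H^2(\B)$ by definition, so that $\Lambda_b$ makes sense for every $b\in BMOA(\B)$.) Chaining these two equivalences yields $T_b$ bounded $\iff b\in BMOA(\B)$. Hence the map $b\mapsto\Lambda_b$ restricts to a bijection from $BMOA(\B)$ onto $(H^2(\B)\prodstella H^2(\B))^*$, which is the asserted coincidence of the two spaces.

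For the comparison of norms I would chain the quantitative versions of the same three results: $\|\Lambda_b\prodstellanormaduale\approx\|T_b\|\approx C(\mu_b)^{1/2}\approx\|b\|_{BMOA(\B)}$, where the first equivalence is the last line of Theorem \ref{bduale}, the second is Theorem \ref{bcarleson} read with constants (via Corollary \ref{coronorma2} and the slicewise reduction in its proof), and the third follows from the slicewise reduction in the proof of Theorem \ref{BMOACarleson} together with the classical equivalence between the $BMOA(\D)$ norm of the holomorphic components $F,G$ and the Carleson constants of $|F'|^2(1-|z|^2)dxdy$, $|G'|^2(1-|z|^2)dxdy$.

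There is essentially no obstacle here beyond bookkeeping; the one point deserving a word is the meaning of ``coincide''. The dual space is a priori a space of bounded linear functionals on $H^2(\B)\prodstella H^2(\B)$; since $H^2(\B)$ embeds continuously and densely in that space, each such functional restricts to an element of $(H^2(\B))^*=H^2(\B)$, hence is of the form $\Lambda_b$ for a unique $b\in H^2(\B)$ — this is exactly the second half of Theorem \ref{bduale}. It is under this identification, the dual space being viewed as a space of regular functions through the $H^2(\B)$ inner product, that the equality with $BMOA(\B)$ holds, with equivalent norms.
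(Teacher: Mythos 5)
Your proposal is correct and follows exactly the paper's route: the paper's entire proof is the single sentence ``Combining Theorem \ref{bduale}, Theorem \ref{bcarleson} and Theorem \ref{BMOACarleson} we get the following identification,'' and your chaining of those three results (together with the clarification of the identification $\Lambda\leftrightarrow b$ via the dense embedding of $H^2(\B)$) is precisely the intended argument, spelled out in more detail.
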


A natural question is then whether the spaces $H^1(\B)$ and  $H^2(\B)\prodstella H^2(\B)$ coincide or not. A partial answer is given by the following result.
\begin{pro}\label{due}
$H^1(\B)=H^2(\B)\star H^2(\B)+H^2(\B)\star H^2(\B)$.
\end{pro}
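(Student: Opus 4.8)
The plan is to prove the two set-theoretic inclusions separately. Both rest on the Splitting Lemma (Lemma \ref{split}), on the expression of the $\star$-product as a twisted pointwise product (Proposition \ref{trasf}), and on the classical Riesz factorization $H^1(\D)=H^2(\D)\cdot H^2(\D)$. As for the $H^2$-norm and the $BMOA$-norm recalled earlier, I will use that the $H^1(\B)$-norm of a regular function can be computed, up to a constant independent of the slice, on a single slice $\B_I$ via the Representation Formula for boundary values (this follows exactly as in Proposition \ref{BMOslice}, since a.e. on $\partial\B_I$ the radial limit satisfies the Representation Formula by \cite{hardy}).

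For the inclusion $H^2(\B)\star H^2(\B)\subseteq H^1(\B)$ (which, $H^1(\B)$ being a vector space, already gives $H^2(\B)\star H^2(\B)+H^2(\B)\star H^2(\B)\subseteq H^1(\B)$), fix $f,g\in H^2(\B)$. Then $f\star g$ is regular, its $n$-th Taylor coefficient $\sum_{k=0}^n a_kb_{n-k}$ being bounded by $\|f\|_{H^2(\B)}\|g\|_{H^2(\B)}$ via Cauchy--Schwarz. Working on a slice $\B_I$, by Proposition \ref{trasf} one has, for a.e. $\theta$, $|(f\star g)(e^{\theta I})|=|f(e^{\theta I})|\cdot|g(f(e^{\theta I})^{-1}e^{\theta I}f(e^{\theta I}))|$, and the argument $f(e^{\theta I})^{-1}e^{\theta I}f(e^{\theta I})$ equals $\cos\theta+\sin\theta\,I'(\theta)$ for some $I'(\theta)\in\s$. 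Using the Representation Formula on the boundary, $|g(\cos\theta+\sin\theta\,I'(\theta))|^2\lesssim|g(e^{\theta I})|^2+|g(e^{-\theta I})|^2$, so Cauchy--Schwarz in $\theta$ gives $\|f\star g\|_{H^1(\B_I)}\lesssim\|f\|_{H^2(\B)}\|g\|_{H^2(\B)}$, whence $f\star g\in H^1(\B)$.

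For the reverse inclusion, take $h\in H^1(\B)$. Split $h$ on $\B_I$ as $h=h_1+h_2J$ with $J\perp I$ and $h_1,h_2:\B_I\to L_I$ holomorphic; since $|h(e^{\theta I})|^2=|h_1(e^{\theta I})|^2+|h_2(e^{\theta I})|^2$, reading $L_I\cong\C$ we get $h_1,h_2\in H^1(\D)$. Riesz factorization yields $h_k=F_kG_k$ with $F_k,G_k\in H^2(\D)$, viewed as $L_I$-valued. Set $\varphi_1=\ext(F_1)$, $\psi_1=\ext(G_1)$, $\varphi_2=\ext(F_2)$, $\psi_2=\ext(G_2)\cdot J$; each lies in $H^2(\B)$, because the power series of $\ext(F_k)$, resp. of $\ext(G_k)\cdot J$, has the same $\ell^2$ norm of coefficients as $F_k$, resp. $G_k$. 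Since $\varphi_1,\varphi_2$ are $L_I$-valued on $\B_I$, the conjugation in Proposition \ref{trasf} is trivial on that slice, so $(\varphi_1\star\psi_1)|_{\B_I}=F_1G_1=h_1$ and $(\varphi_2\star\psi_2)|_{\B_I}=F_2G_2J=h_2J$. Hence the regular function $\varphi_1\star\psi_1+\varphi_2\star\psi_2$ agrees with $h$ on $\B_I$, and by uniqueness of the regular extension (Lemma \ref{extensionlemma}) it equals $h$ on all of $\B$; this exhibits $h\in H^2(\B)\star H^2(\B)+H^2(\B)\star H^2(\B)$.

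The main obstacle is the bookkeeping in the second inclusion: one must verify carefully that extension interacts correctly with the $\star$-product on the distinguished slice — i.e. that $\ext(F_k)\star\ext(G_k)$ restricted to $\B_I$ is the honest pointwise product $F_kG_k$, which works precisely because $\ext(F_k)$ is $L_I$-valued there and the inner conjugation $f(q)^{-1}qf(q)$ in Proposition \ref{trasf} becomes the identity — and to handle the factor $\ext(G_2)\cdot J$, which is not $L_I$-valued, so that the decomposition is genuinely of the claimed two-summand shape. A secondary point, used in both directions, is the slice-independence (up to constants) of the $H^1(\B)$-norm, which follows from the Representation Formula for boundary values exactly as in the $H^2$ and $BMOA$ cases.
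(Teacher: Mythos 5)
Your proof is correct and, for the substantive inclusion $H^1(\B)\subseteq H^2(\B)\star H^2(\B)+H^2(\B)\star H^2(\B)$, follows the same route as the paper: slice, split via the Splitting Lemma, apply Riesz factorization to each holomorphic component, extend the factors, and absorb the unit $J$ into one of them. The only difference is that you reprove the easy containment $H^2(\B)\star H^2(\B)\subseteq H^1(\B)$ directly (the paper simply cites \cite{hardy}); your argument there is sound, though it is cleaner to apply Proposition \ref{trasf} on the circles of radius $r<1$ and take the supremum over $r$, which avoids any discussion of boundary values.
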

\begin{proof}
On the one hand $H^2(\B)\star H^2(\B)+H^2(\B)\star H^2(\B)\subseteq H^1(\B)$ since $H^2(\B)\star H^2(\B)\subseteq H^1(\B)$ as proven in \cite{hardy}.
On the other hand, let $f\in H^1(\B)$, $I,J\in \s$ with $I\perp J$ and let $F,G:\B_I\to L_I$ be such that $f_I=F+GJ$ on $\B_I$. Then, since $F,G\in H^1(\D)\subset H^1(\B_I)$ (see \cite{hardy}), thanks to the classical factorization for the complex Hardy space $H^1(\D)$ we can write $F(z)=F_1(z)F_2(z)$ and $G(z)=G_1(z)G_2(z)$ with $F_1,F_2,G_1,G_2 \in H^2(\D)\subset H^2(\B_I)$.
Hence, on the slice $\B_I$ we have the following expression of $f_I$
\[f_I(z)=F_1(z)F_2(z) + G_1(z)G_2(z)J=F_1\star F_2(z) + G_1\star G_2(z)J ,\]
which can be (uniquely) extended on the entire $\B$ by means of the Extension Lemma as
\[f(q)=\ext(F_1\star F_2 + G_1\star G_2\, J)(q)=\ext(F_1)\star \ext(F_2)(q) + \ext(G_1)\star \ext(G_2)(q)J \]
where the last equality follows from the Identity Principle.   
Since $F_1,F_2,G_1,G_2 \in H^2(\B_I)$ we get that their regular extensions belong to $H^2(\B)$ and therefore that $f\in H^2(\B)\star H^2(\B)+H^2(\B)\star H^2(\B)$.
\end{proof}
As a consequence
we have that $H^1(\B)\subseteq H^2(\B)\prodstella H^2(\B)$. 
\begin{teo}
The dual space of $H^1(\B)$ is $BMOA(\B)$.
\end{teo}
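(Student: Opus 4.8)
The plan is to identify $H^1(\B)$ with the weak $\star$-product space $H^2(\B)\prodstella H^2(\B)$, as Banach spaces with equivalent norms, and then to invoke Corollary \ref{duale}.

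First I would check that $H^1(\B)=H^2(\B)\prodstella H^2(\B)$ as sets. The inclusion $H^1(\B)\subseteq H^2(\B)\prodstella H^2(\B)$ is exactly the remark following Proposition \ref{due}: that proposition exhibits every $f\in H^1(\B)$ as a sum of two $\star$-products of $H^2(\B)$ functions, hence in particular as an element of the weak product. For the converse, I would use the fact from \cite{hardy} that $f\star g\in H^1(\B)$ for $f,g\in H^2(\B)$, in the quantitative form $\|f\star g\|_{H^1(\B)}\le c\,\|f\|_{H^2(\B)}\|g\|_{H^2(\B)}$; this estimate is either quoted from \cite{hardy} or recovered slicewise from the classical bound $\|FG\|_{H^1(\D)}\le\|F\|_{H^2(\D)}\|G\|_{H^2(\D)}$ via the Splitting Lemma and the Representation Formula, the auxiliary points $\hat z,\tilde z$ from the proof of Theorem \ref{bcarleson} playing the same role here. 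Granting the estimate, if $\varphi=\sum_j f_j\star g_j$ with $\sum_j\|f_j\|_{H^2(\B)}\|g_j\|_{H^2(\B)}<\infty$, the partial sums are Cauchy in $H^1(\B)$; since $H^1(\B)$-convergence forces convergence of the power series coefficients, the limit must be $\varphi$, so $\varphi\in H^1(\B)$ and, taking the infimum over decompositions, $\|\varphi\|_{H^1(\B)}\lesssim\|\varphi\prodstellanorma$.

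For the reverse norm inequality I would reuse the factorization in the proof of Proposition \ref{due}. Writing $f_I=F+GJ$ on a slice $\B_I$ and factoring $F=F_1F_2$, $G=G_1G_2$ in $H^2(\D)$ with $\|F_1\|_{H^2}\|F_2\|_{H^2}=\|F\|_{H^1(\D)}$ and $\|G_1\|_{H^2}\|G_2\|_{H^2}=\|G\|_{H^1(\D)}$, the identity $f=\ext(F_1)\star\ext(F_2)+\big(\ext(G_1)\star\ext(G_2)\big)J=\ext(F_1)\star\ext(F_2)+\ext(G_1)\star\big(\ext(G_2)J\big)$ is an admissible decomposition, so $\|f\prodstellanorma\le\|F\|_{H^1(\D)}+\|G\|_{H^1(\D)}\le 2\|f\|_{H^1(\B_I)}=2\|f\|_{H^1(\B)}$, using that right multiplication by $J$ is an $H^2(\B)$-isometry compatible with $\star$, that $\ext$ is an isometry $H^2(\B_I)\to H^2(\B)$, and that $|F|,|G|\le|f_I|$ pointwise on $\p\B_I$. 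Alternatively, once both $H^1(\B)$ and $H^2(\B)\prodstella H^2(\B)$ are known to be Banach spaces and to coincide as sets, the equivalence of the two norms is automatic from the closed graph theorem, which sidesteps the slicewise bookkeeping entirely.

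With $H^1(\B)=H^2(\B)\prodstella H^2(\B)$ as Banach spaces up to equivalence of norms, their dual spaces coincide, and Corollary \ref{duale} identifies this common dual with $BMOA(\B)$. Concretely, combining this with Theorem \ref{bduale} and the density of $H^2(\B)$ — hence of the polynomials — in $H^1(\B)$, every bounded linear functional on $H^1(\B)$ is of the form $h\mapsto\langle h,b\rangle_{H^2(\B)}$ for a unique $b\in BMOA(\B)$, with $\|b\|_{BMOA(\B)}$ comparable to the functional norm. I expect the main obstacle to be the quantitative bound $\|f\star g\|_{H^1(\B)}\lesssim\|f\|_{H^2(\B)}\|g\|_{H^2(\B)}$ together with the attendant slicewise analysis through the Representation Formula — or, if one takes the soft route, verifying that $H^2(\B)\prodstella H^2(\B)$ equipped with the infimum norm is complete, so that the closed graph theorem applies.
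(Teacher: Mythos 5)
Your argument is correct in outline, but it reverses the paper's logical order: the paper proves the duality theorem first --- and only afterwards deduces $H^1(\B)=H^2(\B)\prodstella H^2(\B)$ (Corollary \ref{incrociamoledita}) \emph{from} the equality of duals --- whereas you establish the Banach-space identification $H^1(\B)=H^2(\B)\prodstella H^2(\B)$ directly and then read off the dual from Corollary \ref{duale}. Concretely, the paper only uses Proposition \ref{due} and Corollary \ref{duale} for the easy inclusion $BMOA(\B)\subseteq (H^1(\B))^*$; for the hard inclusion it takes the classical route: extend a functional $L\in (H^1(\B_I))^*$ to $L^1(\p\B_I)$ by the quaternionic Hahn--Banach theorem, represent it by some $\varphi\in L^\infty(\p\B_I)$, replace $\varphi$ by its holomorphic projection $g$, split $g=G_1+G_2J$, and apply the one-variable Fefferman theorem to $G_1,G_2$ before re-assembling via Proposition \ref{BMOslice}. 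Your route avoids Hahn--Banach and the boundary $L^1$--$L^\infty$ duality altogether, at the price of two quantitative inputs the paper never makes explicit: the bound $\|f\star g\|_{H^1(\B)}\lesssim\|f\|_{H^2(\B)}\|g\|_{H^2(\B)}$ (the paper cites only the qualitative inclusion $H^2(\B)\star H^2(\B)\subseteq H^1(\B)$ from \cite{hardy}) together with a convergence argument for the series defining an element of the weak product, or else completeness of $H^2(\B)\prodstella H^2(\B)$ if you take the closed-graph shortcut. Your norm estimate $\|f\prodstellanorma\le 2\|f\|_{H^1(\B)}$ via Riesz factorization of the splitting components is sound (right $\star$-multiplication by the constant $J$ does commute out and is an $H^2$-isometry), and it actually sharpens the paper's Corollary \ref{incrociamoledita} by exhibiting an explicit two-term decomposition with controlled norm, a point the paper only obtains indirectly. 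So: a valid, genuinely different proof, provided you make the quantitative $\star$-product estimate (or the completeness of the weak product space) precise rather than leaving it as a citation.
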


\begin{proof}
Corollary \ref{duale}, Proposition \ref{due}, and a standard density argument, yield that $BMOA(\B)\subseteq (H^1(\B))^*$.

To prove the opposite inclusion, let $L\in (H^1(\B))^*$ be a bounded linear functional on $H^1(\B)$. 
Then, for any $I\in \s$, $L\in (H^1(\B_I))^*$. Since, for any $I\in \s$, \, $H^1(\B_I)$ is a linear subspace of $L^1(\partial \B_I)$ (see \cite{hardy}), the Hahn-Banach Theorem 
in the quaternionic setting (see \cite{Brackx}) yields that $L$ extends to a bounded linear functional on $L^1(\partial \B_I)$. 
As in the complex case, $(L^1(\partial \B_I))^*=L^{\infty}(\partial \B_I)$, hence, there exists $\varphi\in L^{\infty}(\partial \B_I)$ such that
\[L(f)=\frac{1}{2\pi}\int_{-\pi}^{\pi}\overline{\varphi(e^{\theta I})}f(e^{\theta I})d\theta\]
for any $f\in L^1(\partial \B_I)$.
The same duality relation holds, restricted to $f\in H^1(\B)$, even if we replace $\varphi$ by its projection $g=P\varphi$ onto the functions which are holomorphic on $\B_I$, with 
values in the quaternions (here the Splitting Lemma is used); and it follows from one complex dimensional Nehari Theory that $g$ lies in $BMOA(\B_I)$.
Thanks to Proposition \ref{BMOslice} we conclude that $g\in BMOA(\B)$.  
\end{proof}
\begin{coro}\label{incrociamoledita}
 $H^1(\BB)=H^2(\BB)\prodstella H^2(\BB)$.
\end{coro}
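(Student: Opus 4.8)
The plan is to show that $H^1(\B)$ and $H^2(\B)\prodstella H^2(\B)$ consist of exactly the same regular functions and that their norms are equivalent; since $H^1(\B)$ is complete, this automatically makes $H^2(\B)\prodstella H^2(\B)$ complete and identifies the two Banach spaces. So there are really two inclusions to verify, each quantitatively.

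First I would establish $H^2(\B)\prodstella H^2(\B)\subseteq H^1(\B)$ with norm control. The input is the estimate $\|f\star g\|_{H^1(\B)}\lesssim\|f\|_{H^2(\B)}\|g\|_{H^2(\B)}$ for $f,g\in H^2(\B)$, which is proved in \cite{hardy}; it can also be recovered by combining Proposition \ref{trasf} (whence $|f\star g(q)|=|f(q)|\,|g(f(q)^{-1}qf(q))|$, and the inner point has the same modulus as $q$ and lies on the same two–sphere) with the Cauchy–Schwarz inequality on each circle $\partial\B_I$ and the Representation Formula, just as in the proof of Theorem \ref{bcarleson}. Granting it, if $\Phi=\sum_j f_j\star g_j$ then $\|\Phi\|_{H^1(\B)}\le\sum_j\|f_j\star g_j\|_{H^1(\B)}\lesssim\sum_j\|f_j\|_{H^2(\B)}\|g_j\|_{H^2(\B)}$; taking the infimum over all admissible decompositions of $\Phi$ gives $\Phi\in H^1(\B)$ with $\|\Phi\|_{H^1(\B)}\lesssim\|\Phi\prodstellanorma$. (In particular, finite $\star$-sums, hence polynomials, are dense in $H^2(\B)\prodstella H^2(\B)$.)

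For the reverse inclusion $H^1(\B)\subseteq H^2(\B)\prodstella H^2(\B)$ I would revisit the proof of Proposition \ref{due} and note that it is quantitative. Given $f\in H^1(\B)$, fix a slice $\B_I$ and split $f|_{\B_I}=F+GJ$ with $F,G\in H^1(\D)$ and $J\perp I$; by the classical Riesz factorization write $F=F_1F_2$ and $G=G_1G_2$ with $F_i,G_i\in H^2(\D)$ and $\|F_1\|_{H^2(\D)}\|F_2\|_{H^2(\D)}=\|F\|_{H^1(\D)}$, $\|G_1\|_{H^2(\D)}\|G_2\|_{H^2(\D)}=\|G\|_{H^1(\D)}$. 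The decomposition $f=\ext(F_1)\star\ext(F_2)+\ext(G_1)\star\ext(G_2\,J)$ produced there exhibits $f$ as a two–term $\star$-sum of products of $H^2(\B)$ functions, and since $\|\ext(\cdot)\|_{H^2(\B)}$ agrees with the slicewise $H^2$-norm and $|f|^2=|F|^2+|G|^2$ a.e.\ on $\partial\B_I$,
\[\|f\prodstellanorma\le\|F\|_{H^1(\D)}+\|G\|_{H^1(\D)}\lesssim\|f\|_{H^1(\B_I)}\approx\|f\|_{H^1(\B)}.\]
Combining this with the previous paragraph, $H^1(\B)$ and $H^2(\B)\prodstella H^2(\B)$ have the same elements and equivalent norms, which is the Corollary.

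Alternatively, one can finish by duality, and this is perhaps the more instructive route. The inclusion $H^1(\B)\hookrightarrow H^2(\B)\prodstella H^2(\B)$ is bounded (Proposition \ref{due}) and has dense range (it contains the polynomials), while Corollary \ref{duale} and the theorem just preceding the Corollary identify both $(H^2(\B)\prodstella H^2(\B))^*$ and $(H^1(\B))^*$ with $BMOA(\B)$ via the \emph{same} assignment $b\mapsto\Lambda_b$, $\Lambda_b(h)=\langle h,b\rangle_{H^2(\B)}$. Hence the adjoint of the inclusion is the identity of $BMOA(\B)$; being surjective, it forces the inclusion to be bounded below, so its range is closed, and being dense it is everything, and the inclusion is an isomorphism. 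The only genuinely non-bookkeeping point here — and the one I would treat most carefully — is the compatibility of the two realizations of $BMOA(\B)$ as a dual space: for $h\in H^2(\B)$ the boundary pairing $\frac{1}{2\pi}\int_{\partial\B_I}\overline{b(e^{\theta I})}h(e^{\theta I})\,d\theta$ used to describe $(H^1(\B))^*$ coincides with $\langle h,b\rangle_{H^2(\B)}$, which follows from the orthogonality relations for the exponentials $e^{n\theta I}$ on $\partial\B_I$, so that the two functional-analytic identifications are indeed the same map.
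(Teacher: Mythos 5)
Your proposal is correct, and your primary route is genuinely different from the paper's. The paper disposes of the corollary in one line --- ``equality of duals implies norm equivalence'' --- which, taken literally, is not a theorem about Banach spaces (preduals are not unique in general); what is really meant is exactly the argument you spell out in your ``alternative'' paragraph: the inclusion $H^1(\B)\hookrightarrow H^2(\B)\prodstella H^2(\B)$ is bounded with dense range, its adjoint is (after checking that the two realizations of $BMOA(\B)$ use the same pairing $b\mapsto\Lambda_b$) the identity, hence surjective, hence the inclusion is bounded below and therefore onto. Your attention to the compatibility of the boundary pairing with $\langle\cdot,b\rangle_{H^2(\B)}$ is precisely the point the paper leaves implicit, so that part of your write-up is a faithful, more careful rendering of the intended proof. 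Your first route --- making the proof of Proposition~\ref{due} quantitative via Riesz factorization $F=F_1F_2$, $G=G_1G_2$ with $\|F_1\|_{H^2(\D)}\|F_2\|_{H^2(\D)}=\|F\|_{H^1(\D)}$ and the pointwise bounds $|F|,|G|\le|f|$ on $\partial\B_I$ --- is more direct and buys strictly more: it exhibits every $f\in H^1(\B)$ as a \emph{two-term} $\star$-sum with $\|f\prodstellanorma\lesssim\|f\|_{H^1(\B)}$, which is the ``two summands suffice'' fact the authors say they can only obtain ``in a rather indirect way'' through the duality. The converse inclusion with norm control, via $\|f\star g\|_{H^1(\B)}\lesssim\|f\|_{H^2(\B)}\|g\|_{H^2(\B)}$ summed over a decomposition, is also needed (the weak product allows infinite sums, so Proposition~\ref{due} alone does not give the continuous embedding), and you supply it. Both of your arguments are sound.
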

\begin{proof}
 Equality of duals implies norm equivalence in the spaces.
\end{proof}

\section{Concluding remarks}\label{trento}
We conclude with some problems connected with the paper's subject which we could not answer.
\begin{itemize}
\item We know that $H^1(\BB)=H^2(\BB)\prodstella H^2(\BB)$.
It would be interesting to know if $H^2(\B)\star H^2(\B)=H^1(\B)$; that is if
 a good factorization theory exists in the quaternionic setting. 
\item Our testing function for the Carleson measure theorem is an average of reproducing kernels. It would be interesting to know if the ``reproducing kernel thesis'' holds;
that is if the inequality $\int_\B\left|k_w\right|^2d\mu\le c(\mu)\left\|k_w \right\|_{H^2(\B)}^2$, with $c(\mu)$ independent of $w$, implies that $\mu$ is Carleson for $H^2(\B)$.
\end{itemize}
\vskip 0.5 cm

\noindent{\small {\bf Acknowledgments.} 
We thank Zhenghua Xu (University of Hefei, China) for pointing out that the finiteness of the measure $\mu$ is needed in Proposition 3.1. Both authors are partially supported by the PRIN project ``Real and Complex Manifolds'' of the Italian MIUR. The first author is also partially supported by GNAMPA of the INdAM. The second author is also partially supported by GNSAGA of the INdAM, by  the FIRB project ``Differential Geometry and Geometric Function Theory'' and by the SIR project ``Analytic aspects in complex and hypercomplex geometry'' of the Italian MIUR.}
\vskip 0.5cm


\end{document}